\documentclass[12pt,a4paper,reqno]{amsart}
\usepackage[utf8]{inputenc}

\usepackage{amsmath, amsthm, amssymb, amsfonts, amscd, mathrsfs, xcolor, mathabx, graphicx, caption}
\usepackage[foot]{amsaddr}
\captionsetup{width=\textwidth, font=footnotesize}

%%% Short environments %%%
\newtheorem{theo}{Theorem}[section]

\newtheorem{prop}[theo]{Proposition}

\newtheorem{rem}[theo]{Remark}

%%% Sets %%%
\newcommand{\R}{\mathbb{R}} % reals
\newcommand{\C}{\mathbb{C}} % complexes
\newcommand{\Z}{\mathbb{Z}} % integers
 % naturals

%%% Operators and Hilbert %%%
   % bounded operators
   % unitary operators
\newcommand{\F}{\mathcal{F}}   % Fourier transform
 % projections
\newcommand{\Hil}{\mathcal{H}} % a Hilbert space
   % bounded operators

%%% Shortcuts %%%

\newcommand{\ol}[1]{\overline{#1}}

\newcommand{\wh}[1]{\widehat{#1}}

%%% Colors %%%

%%%

\title[Sampling in the Euclidean motion group]{Sampling in the Euclidean motion group and a problem from brain's visual cortex}
\author[Davide Barbieri]{Davide Barbieri}
\address{Departamento de Matem\'aticas, Universidad Aut\'onoma de Madrid. Campus Cantoblanco, 28049 Madrid.}
\email{davide.barbieri@uam.es}

\begin{document}

\begin{abstract}
We study a sampling problem for the abstract wavelet transform associated with the quasiregular representation of the $SE(2)$ group, for a modulated gaussian mother wavelet. This problem is motivated by the behavior of brain's primary visual cortex. We provide a characterization in terms of a dual Gramian matrix, and study numerically the relationships among the parameters defining the sampling and the mother wavelet.
\end{abstract}

\maketitle

\section{Introduction}
Let $SE(2) = \R^2 \rtimes SO(2)$ be the group of rigid motions of the Euclidean plane, with composition law
\[
(x,\theta)\cdot(y,\alpha) = (x + r_\theta y, \theta + \alpha)
\]
for $x, y \in \R^2$ and $\theta, \alpha \in S^1 = [0,2\pi)$, where $r_\theta = \binom{\cos\theta \ \ -\sin\theta}{\sin\theta \ \ \phantom{-}\cos\theta}$. Let $\pi$ denote the unitary representation $\pi : SE(2) \to \mathcal{U}(L^2(\R^2))$ defined by
\[
\pi(x,\theta) f(y) = f(r_{-\theta}(y - x)) \, , \ f \in L^2(\R^2)
\]
and, for a mother wavelet $\psi \in L^2(\R^2)$, let $W_\psi : L^2(\R^2) \to \mathcal{C}(\R^2 \times S^1)$ be the associated continuous abstract wavelet transform \cite{Fuhr2005}:
\begin{equation}\label{eq:W}
W_\psi f (x,\theta) = \langle f, \pi(x,\theta)\psi\rangle_{L^2(\R^2)} = \int_{\R^2} f(y) \ol{\psi(r_{-\theta}(y - x))} dy.
\end{equation}

In this paper we want to consider the problem of sampling for \eqref{eq:W} with points that lay on a subset of $SE(2)$ given by the graph of a function $\Theta : \R^2 \to S^1$. More precisely, we want to find conditions that relate a countable subset $S = \{(\lambda, \Theta(\lambda)) : \lambda \in \Lambda \subset \R^2\} \subset \R^2 \times S^1$, a mother wavelet $\psi$ and a Hilbert subspace $\Hil \subset L^2(\R^2)$ so that there exist $0 < A \leq B < \infty$ such that:
\[
A \|f\|_{L^2(\R^2)}^2 \leq \sum_{\lambda \in \Lambda} |W_\psi f(\lambda, \Theta(\lambda))|^2 \leq B \|f\|_{L^2(\R^2)}^2 \, , \ \forall \, f \in \Hil.
\]

This is the condition for the family $\{\pi(\lambda,\Theta(\lambda))\psi\}_{\lambda \in \Lambda}$ to be a frame \cite{Daubechies1992, Christensen2016}, or, equivalently, for the map $f \mapsto \{W_\psi f(\lambda, \Theta(\lambda)) : \lambda \in \Lambda\}$ to be bounded invertible with a bounded inverse from $\Hil$ to $\ell_2(\Lambda)$. The stability of the inversion is quantified by the condition number $\kappa = \frac{B}{A}$.

We are particularly interested in a family of mother wavelets $\psi$ parametrized by $p, \sigma > 0$, having the form of modulated gaussians
\begin{equation}\label{eq:gaussian}
\psi(x) = \frac{1}{2\pi \sigma^2} e^{2\pi i p x_1} e^{-\frac{|x|^2}{2\sigma^2}} \, , \ x \in \R^2
\end{equation}
and in a class of maps $\Theta$ with a discrete image and whose level sets $\{\lambda \in \Lambda : \Theta(\lambda) = \theta_0\}$, for $\theta_0 \in \Theta(\Lambda)$, are approximately equally spaced.

\vspace{0.5ex}

The paper is organized as follows. In \S \ref{sec:motivation} we discuss a motivation for this problem, found in visual perception, and provide a brief description of key experimental facts in the neurophysiology of brain's visual cortex and of their mathematical models. In \S \ref{sec:statement} we give a formal statement of the problem, including a feasible Hilbert subspace $\Hil$, and a class of functions $\Theta$ that is meaningful for the problem in vision. This class is also chosen in order to allow us to use techniques of shift-invariant spaces, in terms of which we obtain a characterization of frames in \S \ref{sec:dualGram} with Theorem \ref{th}. Finally, in \S \ref{sec:numerics}, we study numerically the relationships between $\psi, \Theta, \Hil$ and the frame constants.

\vspace{0.5ex}

In order to make this paper accessible to readers interested in the mathematical modeling of vision, we have tried to be self-contained. We have also included a dedicated conclusions section, comparing the numerical results with pertinent literature in neuroscience.

\subsection{Notations}
We denote the Fourier transform of $f \in L^1(\R^2)$ by
\[
\mathcal{F}(f)(\xi) = \wh{f}(\xi) = \int_{\R^2} f(x) e^{-2\pi i x.\xi} dx
\]
and we use the same notation for its standard extension to a unitary operator on $L^2(\R^2)$.
For a variable $x$ on a Lebesgue measurable space we indicate by $dx$ the Lebesgue measure.
For $x, \xi \in \R^2$ we denote by $e_x(\xi) = e^{-2\pi i x.\xi}$ the complex exponential functions and, for $f \in L^2(\R^2)$ and $\theta \in S^1$, we use the notations $f_\theta(x) = f(r_{-\theta}x)$ and $\wh{f_\theta}(\xi) = \wh{f}(r_{-\theta}\xi)$. We will make use of the following elementary relationship:
\begin{equation}\label{eq:intertwining}
\mathcal{F}(\pi(x,\theta) f) = e_x \wh{f_\theta}.
\end{equation}

A lattice subgroup $\Gamma < \R^2$ is full rank if there exists a $2 \times 2$ invertible matrix $A \in GL_2(\R)$ such that $\Gamma = A \Z^2 = \{A\binom{n_1}{n_2} : n_i \in \Z, i=1,2\}$. A  fundamental set for $\Gamma$ is a measurable $\Omega \subset \R^2$ representative of the quotient $\R^2/\Gamma$, for example $\Omega = A [0,1]^2 = \{A \binom{\xi_1}{\xi_2} : \xi_i \in [0,1], i=1,2\}$. The annihilator lattice of $\Gamma$ is $\Gamma^\perp := \{ \nu \in \R^2 : e_{\nu}(\gamma) = 1 \, \forall \, \gamma \in \Gamma\}$ and, if $\Gamma = A \Z^2$ for $A \in GL_2(\R)$, it is given by $\Gamma^\perp = (A^t)^{-1}\Z^2$.

Let us recall here that, for such $\Gamma, \Gamma^\perp$ and $\Omega$, by a standard change of variable in Parseval's theorem for Fourier series, for all $f \in L^2(\Omega)$ we have (see also \cite[Th. 1.6.1, Th. 2.1.2]{Rudin1962} or \cite[Th. 8.4.2]{Deitmar2005})
\begin{equation}\label{eq:Parseval}
\sum_{\nu \in \Gamma^\perp} \left| \int_\Omega f(\omega) e_\nu(\omega) d\omega \right|^2 = |\Omega| \int_\Omega |f(\omega)|^2 d\omega.
\end{equation}

We will make use of the \emph{bracket map} \cite{BVR1994, HSWW2010}: if $\Gamma < \R^2$ is full-rank and $\Omega$ is a fundamental set for $\Gamma^\perp$, for $f, \phi \in L^2(\R^2)$ we denote by\vspace{-.5ex}
\begin{equation}\label{eq:bracket}
[f,\phi]_{\Gamma}(\omega) = \sum_{\nu \in \Gamma^\perp} \wh{f}(\omega + \nu) \ol{\wh{\phi}(\omega + \nu)} \, ,\vspace{-.7ex}
\end{equation}
which is an $L^1(\Omega)$ function satisfying\vspace{-.65ex}
\begin{equation}\label{eq:bracketintegral}
\langle f, \phi\rangle_{L^2(\R^2)} = \int_\Omega [f,\phi]_{\Gamma}(\omega) d\omega.\vspace{-.7ex}
\end{equation}

\subsection{Related works}
The continuous wavelet transform \eqref{eq:W} is a classical object when paired with dilations, see e.g. \cite{AMVT2004}. For anisotropic dilations, its discretization was studied for the so-called \emph{curvelets} \cite{CandesDonoho2005}, and a similar problem replacing rotations by other, possibly more computationally effective, operators led to the development of \emph{wavelets with composite dilations} and \emph{shearlets} \cite{Labate2006, Labate2005}. The $SE(2)$ transform \eqref{eq:W}, its discretization, and its use for image processing, also in relationship with the modeling of the visual cortex, was studied in \cite{Duits2007}

In this paper, where dilations are not considered, we follow an approach that relies on sampling with multiple copies of a given lattice. These sets were first considered in the setting of \emph{multi-tilings} and Riesz bases of exponentials \cite{Kolountzakis2015}. In particular, we make use of arguments introduced in \cite{AgoraAntezanaCabrelli2015} and, for frames of \emph{sub-(milti)tilings}, in \cite{BHM2017, BCHLMM2018}. The problem we consider here could be seen as a problem of frames of modulates, hence bearing several similarities with those works.

The framework we consider for this problem is that of shift-invariant spaces, and it is largely based on \cite{Bownik2008, CabrelliPaternostro2010}. Shift-invariant spaces together with rotations have been used to study optimal approximation in \cite{BCHM2019, BCHM2020}, while the reconstruction of $SE(2)$-wavelets from arbitrary sampling sets without computing the dual frame was addressed in \cite{B2022}.

The mathematical modeling of neuronal activity in primary visual cortex V1 in terms of operations similar to \eqref{eq:W}, discussed in \S \ref{sec:motivation}, dates back to the 80's \cite{Daugman1985}, and a neurophysiological model that adds scale to \eqref{eq:W} was proposed in \cite{SCP2008}. The first models of the V1 distributions that motivate our construction of sampling sets, again see \S \ref{sec:motivation}, are from the 90's \cite{NW1994}. The pinwheel structure encountered in these distributions motivated a large sequence of modeling works, based on self-organization arguments (see e.g. \cite{Swindale2000, KW2010}), or on geometric arguments, such as the construction based on rotations and scale given in \cite{BCS2018}.

Finally, we want to point the reader to a concept that has some analogies with the problem of frames studied here, and that has been addressed by the neuroscience community to understand the structure of the visual cortex since the 90's, called the \emph{coverage principle}: ``the cortical representa\-tion has to cover as good as possible the visual stimulus features'' (see \cite{KeilWolf2011} and references therein).

\vspace{1ex}
\noindent
{\bf Acknowledgements}
This project has received funding from Grant PID2022-142202NB-I00 funded by AEI/10.13039/501100011033.

\newpage

\section{Motivation from visual perception}\label{sec:motivation}
This problem is motivated by classical experiments in the neurophysiology of the brain's primary visual cortex (V1) of many mammals, including humans. This section aims to introduce a mathematical model that relates such experiments with the previously described problem.

The activity\footnote{The electric activity of sensory neurons can be approximated by a Poisson process governed by the value of a linear functional of the stimulus \cite{Simoncelli2004}.} of the neurons in V1 - in response to a visual stimulus (image) captured by the retina - is characterized by a sensitivity to a given region in the visual space, called the \emph{classical receptive field}, and by a selectivity to given values in a set of features of that region in the image. In particular, for a V1 neuron, one edge orientation elicits the maximal response, called the \emph{preferred orientation} of the neuron \cite{HubelWiesel1962}.

Assuming for simplicity the visual space to be the plane $\R^2$, a visual stimulus can be modeled as a function $f : \R^2 \to \R$ measuring light intensity. For a given neuron $\eta \in V1$, denote by $x_\eta \in \R^2$ the center of its receptive field, and by $\theta_\eta \in S^1$ its preferred orientation. A large set of experiments on different mammals shows that the value $W_\psi f (x_\eta,\theta_\eta)$ computed as in \eqref{eq:W}, with $\psi$ in the form \eqref{eq:gaussian}, provides a meaningful approximation of the linear activity of the neuron $\eta$ in response to an image $f$ for a majority of V1 neurons called \emph{simple cells} \cite{JP1987,  Ringach2002, Keliris2019}. Some of such measurements, and fits with functions of the form $\pi(0,\theta)\psi$, in cats and macaques, are reproduced in Figure \ref{fig:receptivefields}. More realistic models include more parameters, nonlinearities, cortical connectivities, or dynamic behaviors, that we are disregarding (for a nonexhaustive panorama, see \cite{CarandiniHeeger2012, Bertalmio2024, RaoBallard1999, Gerstner2014, B2015} and references therein).

\begin{figure}[h!]
\centering
\includegraphics[height=.285\textheight]{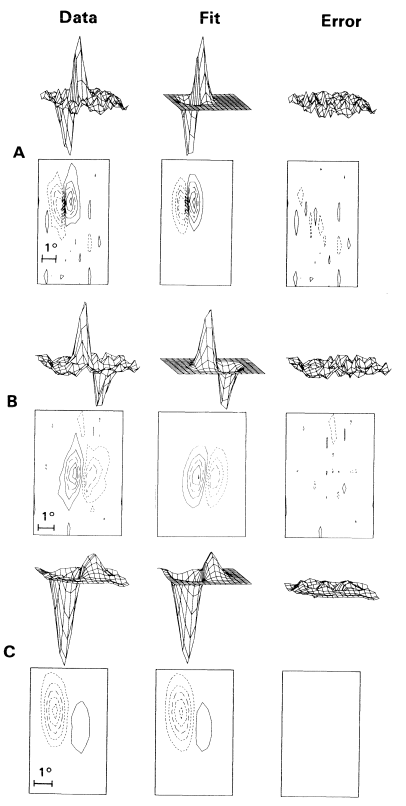} \qquad
\includegraphics[height=.285\textheight]{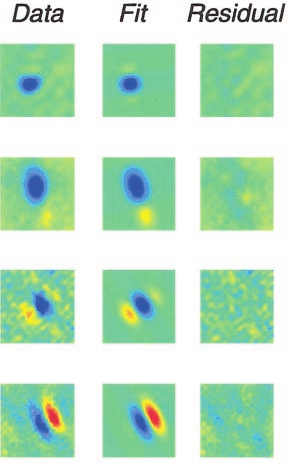} \ \
\includegraphics[height=.285\textheight]{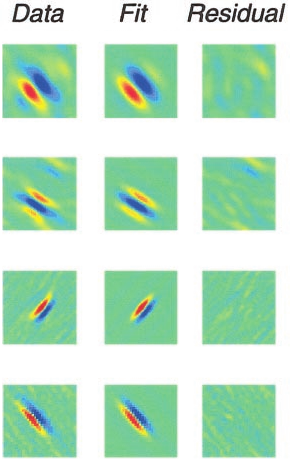}
\caption{Electrophysiological measurements of the representative vectors for the linear functionals that govern the activities of different V1 neurons, and fits with rotated modulated gaussians. Left, from \cite{JP1987}: in cats. Right, from \cite{Ringach2002}: in macaques.}\label{fig:receptivefields}.
\end{figure}

In a second class of experiments, it has been observed that the parameters $\{(x_\eta,\theta_\eta)\}_{\eta \in V_1} \subset R^2 \times S^1$ associated with the preferred positions and orientations of V1 neurons are not distributed in a three dimensional fashion (as the three dimensional volume of brain occupied by V1 may allow). They rather follow two-dimensional distributions on parallel layers of V1. The parameters $x_\eta$, defining the position of the receptive fields in the the visual space, are associated to the physical position of neurons in a two dimensional layer of V1 by means of an approximately conformal map\footnote{This is an approximately log-polar map, that associates a larger cortical area to the center of the visual space and a smaller cortical area to the peripheral vision.}, called the \emph{retinotopic map} \cite{Valois1990}. A common simplification is that of taking this map to be the identity, hence identifying a two dimensional layer of V1 with the visual space itself. On the other hand, the preferred orientations $\theta_\eta$ of V1 neurons are distributed according to a two dimensional \emph{orientation preference map} (OPM) that is the same on each two dimensional layer \cite{Ohki2006}. This map has a characteristic correlation length \cite{NW1994}, meaning that similar preferred orientations are encountered in neurons that are approximately equidistant, and a pinwheel-like shape \cite{BS1986}, see Figure \ref{fig:OPM}.

\begin{figure}[h!]
\centering
\includegraphics[height=.15\textheight]{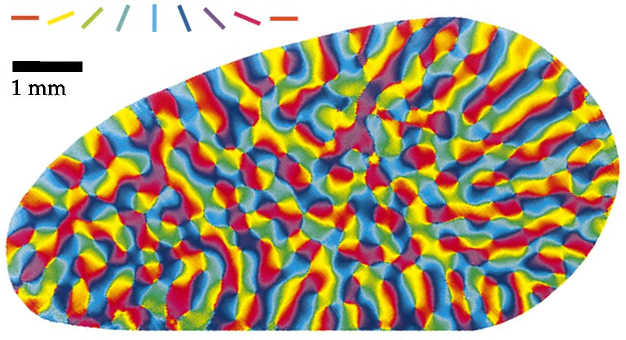} \
\includegraphics[height=.15\textheight]{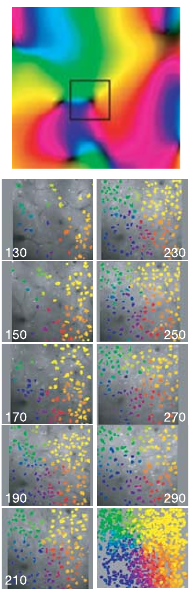} \
\includegraphics[height=.15\textheight]{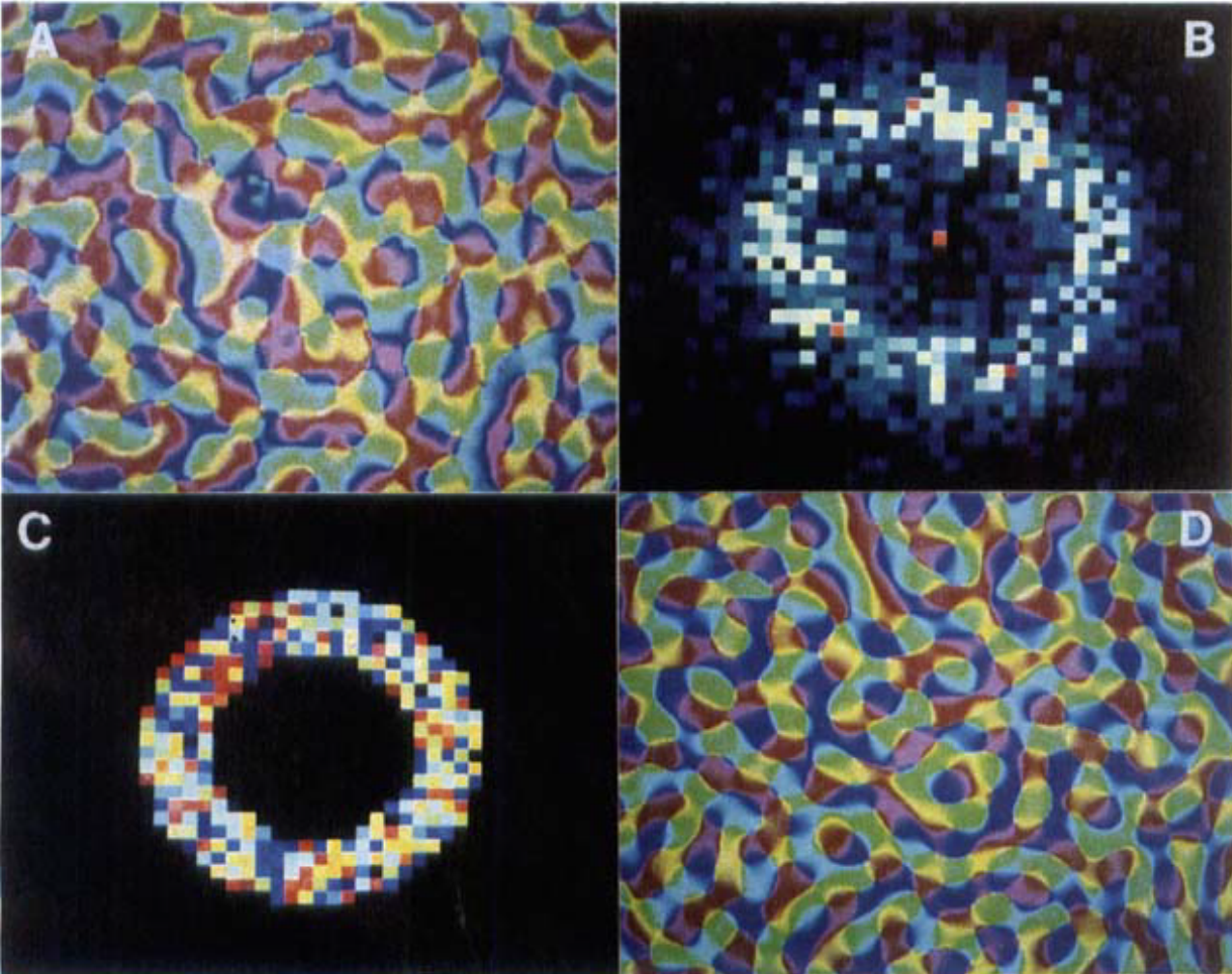}
\caption{Left, from \cite{Bosking1997}: optical imaging measurements of OPM in V1 of a tree-shrew. Color indicates the preferred orientation of the underlying neuron.
Center, from \cite{Ohki2006}: preferred orientation of single neurons measured at different widths, proving the two-dimensional structure of OPM.
Right, from \cite{NW1994}: (A) image of OPM in macaques measured by \cite{BS1986}, (B) its Discrete Fourier Transform, (D) the image that results by inverse DFT of (C) a random distribution on a thin annulus.}\label{fig:OPM}
\end{figure}
\vspace{-1ex}
Let us then denote by $\Theta : \R^2 \to S^1$ the function that assigns a preferred orientation to each point of $\R^2$, representing the location of a visual neuron on a two dimensional layer of V1. After identifying two dimensional layers of V1 with the visual space by the mentioned simplified retinotopy assumtion, we can summarize the model we want to consider of linear V1 neural activity at a point $x$ on a layer of V1 in response to a stimulus $f \in L^2(\R^2)$, as
\[
W_\psi f (x,\Theta(x)) = \frac{1}{2\pi \sigma^2} \int_{\R^2} f(y) e^{-2\pi i p \langle \binom{\cos\Theta(x)}{\sin\Theta(x)} , y-x\rangle} e^{-\frac{|y - x|^2}{2\sigma^2}} dy.
\]
Since V1 neurons form a discrete set, a natural problem for this simplified model is to describe which visual stimuli can be represented by a discretization of this activity, and how reliable this representation is.

\newpage

\section{Sampling conditions}

We begin the section by introducing a natural Hilbert subspace of $L^2(\R^2)$ for sampling the $SE(2)$ wavelets we are interested in. We then formalize the problem, by defining a class of sampling sets that is compatible with the presented model of V1. This class allows us to work within the setting of shift-invariant spaces: the result is largely inspired by the works \cite{Bownik2008, CabrelliPaternostro2010, AgoraAntezanaCabrelli2015}, in that it provides a characterization in terms of the spectrum of the so-called \emph{dual Gramian}, and includes techniques of bracket maps from \cite{BHP2020}. We conclude the section with a result for a frequency cutoff of the mother wavelet \eqref{eq:gaussian} since, even if it is more elementary and has limited applications, it provides sufficient conditions that are explicit and thus useful for heuristics on parameter choices.

\subsection{A Hilbert space for a frame condition}\label{sec:Calderon}
It is well-known that, for any $\psi \in L^2(\R^2)$, there exist no constants $0 < A \leq B < \infty$ such that
\begin{equation}\label{eq:continuousframe}
A \|f\|^2_{L^2(\R^2)} \leq \int_{S^1} \int_{\R^2} |W_\psi f (x,\theta)|^2 dx d\theta \leq B \|f\|^2_{L^2(\R^2)} 
\end{equation}
holds for all $f \in L^2(\R^2)$ (see \cite{WW2001}). This can be seen explicitly as follows: by Plancherel's theorem and \eqref{eq:intertwining}, we have that
\[
\int_{S^1} \int_{\R^2} |W_\psi f (x,\theta)|^2 dx = \int_{\R^2} |\wh{f}(\xi)|^2 \left(\int_{S^1} |\wh{\psi_\theta}(\xi)|^2 d\theta\right)d\xi \, ,
\]
so \eqref{eq:continuousframe} is equivalent to the Calder\'on-type condition 
\begin{equation}\label{eq:Calderon}
A \leq \int_{S^1} |\wh{\psi_\theta}(\xi)|^2 d\theta \leq B
\end{equation}
for a.e. $\xi \in \R^2$. However, since $\|\psi\|_{L^2(\R^2)}^2 \! = \! \displaystyle\int_0^\infty \!\!\int_{S^1}\left|\wh{\psi}(\varrho r_\theta \hat{v})\right|^2 \! d\theta \varrho d\varrho$ for any $v \in \R^2$ with $|v| = 1$, then \eqref{eq:Calderon} can not hold for a $\psi \in L^2(\R^2)$ and for a.e. $\xi \in \R^2$ unless $\psi = 0$.\vspace{0.5ex}

On the other hand, if $\psi \in L^2(\R^2)$ is such that \eqref{eq:Calderon} holds for a.e. $\xi \in B(0,\varrho) := \{\xi \in \R^2 : |\xi| < \varrho\}$, for a finite $\varrho > 0$, then \eqref{eq:continuousframe} holds for all $f$ in the Paley Wiener Space
\[
\textsc{pw}(B(0,\varrho)) := \{f \in L^2(\R^2) : \textnormal{supp}(\wh{f}) \subset B(0,\varrho)\}.
\]
Even if the modulated gaussian \eqref{eq:gaussian} does not belong to $\textsc{pw}(B(0,\varrho))$, since its Fourier transform reads $\displaystyle \wh{\psi}(\xi) = e^{-2\pi^2\sigma^2|\xi - \binom{p}{0}|^2}$, we have
\[
\int_{S^1} |\wh{\psi_\theta}(\xi)|^2 d\theta = 2\pi e^{-4\pi^2\sigma^2(|\xi|^2 + p^2)} I_0(8\pi^2\sigma^2 p|\xi|)
\]
where $I_0$ is modified Bessel function of the the first kind, of order zero. This implies that, for any finite $\varrho > 0$, such a $\psi$ satisfies the Calder\'on condition \eqref{eq:Calderon} for all $\xi \in B(0,\varrho)$.
Thus, since $W_\psi f$ is a continuous function, it is meaningful to consider, for $\psi$ as in \eqref{eq:gaussian}, the problem of discrete sampling of $W_\psi f$ when $f \in \textsc{pw}(B(0,\varrho))$.

\newpage
\subsection{Statement of the problem}\label{sec:statement}

Given a full rank lattice $\Gamma < \R^2$ we want to consider sampling sets of the form\vspace{-.5ex}
\[
S = \bigsqcup_{k = 1}^N \Big((\Gamma + \alpha_k) \times \{\theta_k\}\Big)\vspace{-.5ex}
\]
for $N$ points of the plane $\{\alpha_k\}_{k = 1}^{N} \subset \R^2$ such that
\[
(\Gamma + \alpha_k) \cap (\Gamma + \alpha_{k'}) = \emptyset \, , \ \textnormal{for } k \neq k'
\]
and for $N$ distinct angles $\{\theta_k\}_{k = 1}^{N} \subset [0, 2\pi)$. Such an $S$ can be written
\[
S = \{(\lambda, \Theta(\lambda)) : \lambda \in \Lambda \subset \R^2\} \subset \R^2 \times S^1
\]
by choosing $\Lambda$ to be the countable set\vspace{-.5ex}
\begin{equation}\label{eq:Lambda}
\Lambda = \bigsqcup_{k = 1}^N (\Gamma + \alpha_k)\vspace{-.5ex}
\end{equation}
and $\Theta : \Lambda \to [0,2\pi)$ to be the function defined by
\[
\Theta(\lambda) = \theta_k \ \textnormal{if } \lambda \in \Gamma + \alpha_k \, , \ \textnormal{for } k = 1, \dots, N.
\]
For $\psi$ as in \eqref{eq:gaussian} and $I_\psi$ defined by
\begin{equation}\label{eq:I}
I_\psi(f) = \sum_{\lambda \in \Lambda} |W_\psi f(\lambda, \Theta(\lambda))|^2 \, , \ f \in \textsc{pw}(B(0,\varrho))\vspace{-.5ex}
\end{equation}
the problem we want to consider is to characterize the frame condition
\begin{equation}\label{eq:newframe}
A \|f\|_{L^2(\R^2)} \leq I_\psi(f) \leq B \|f\|_{L^2(\R^2)} \, , \ \forall \, f \in \textsc{pw}(B(0,\varrho)).
\end{equation}

\vspace{.5ex}
From the point of view of the mathematical modeling of V1, this construction does not reproduce exactly what observed in measurements of OPM, but it preserves the key property of having evenly spaced points associated with the same angle. As an additional observation, the distribution of each preferred orientation in V1 indeed appears to be compatible with a perturbation of a full rank lattice, see Figure \ref{fig:pincenters}.

\begin{figure}[h!]
\centering
\includegraphics[width=.74\textwidth]{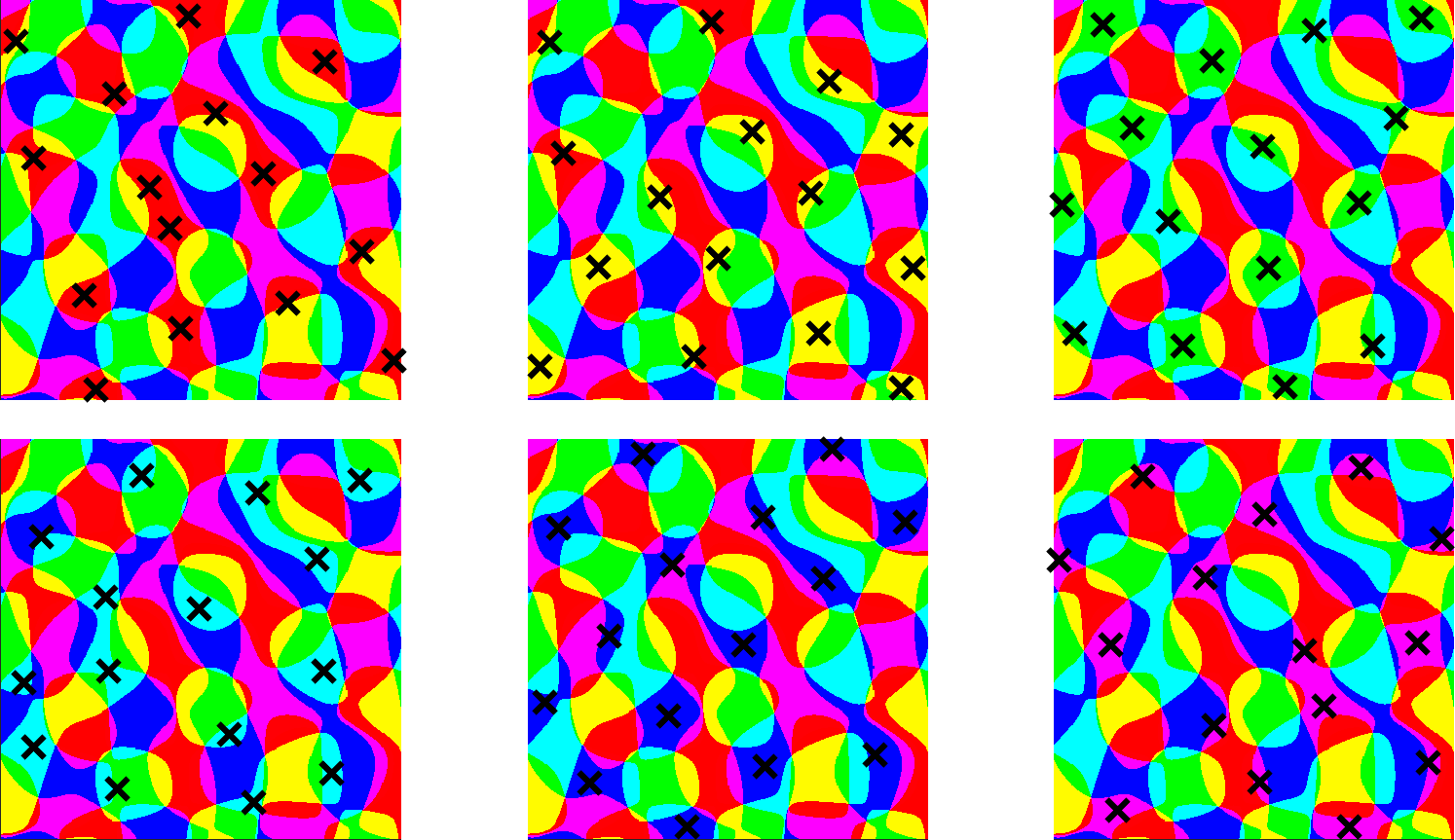}
\caption{Artificial OPM-like distribution generated as in Figure \ref{fig:OPM} Right, D \cite{NW1994}.\\ In this image, colors (representing preferred orientations) are quantized into 6 channels. For each, the centers of a standard k-means clustering are marked with an X.}\label{fig:pincenters}
\end{figure}

\newpage
\subsection{Characterization in terms of a dual Gramian matrix}\label{sec:dualGram}

Here we prove a characterization of the frame condition \eqref{eq:newframe} in terms of an equivalent spectral condition that is analogous to \cite[Th. 2.5]{Bownik2008} and \cite[Th. 4.1]{CabrelliPaternostro2010}. Since our shift-invariant space is a Paley-Wiener space, the result has close similarities with \cite[Th. 2.9]{AgoraAntezanaCabrelli2015} and \cite[Prop. 3.1]{BCHLMM2018}. We provide a direct proof of the equivalence using an idea from \cite[Th. 29]{BHP2020}.

\begin{theo}\label{th}
Let $\Gamma < \R^2$ be a full-rank lattice, let $\Gamma^\perp$ be its annihilator lattice, and let $\Omega \subset \R^2$ be a fundamental set for the quotient $\R^2 / \Gamma^\perp$. For $\varrho > 0$, denote by
\[
\mathcal{V}_\varrho(\omega) = \{\nu \in \Gamma^\perp : |\nu + \omega| < \varrho\}
\]

Let $p, \sigma > 0$, and let $\{(\alpha_k,\theta_k)\}_{k = 1}^N \subset \R^2 \times S^1$. For $\omega \in \Omega$, denote by $G(\omega) \in \R^{\mathcal{V}_\varrho(\omega) \times \mathcal{V}_\varrho(\omega)}$ the matrix
\begin{equation}\label{eq:dualGram}
G(\omega)_{\nu, \nu'} = e^{-\pi^2\sigma^2|\nu - \nu'|^2} \sum_{k = 1}^N e_{\alpha_k}(\nu - \nu') e^{-4\pi^2\sigma^2 \big|\omega + \frac{\nu + \nu'}{2} - \binom{p\cos\theta_k}{p\sin\theta_k}\big|^2}
\end{equation}
and let $\Sigma(\omega)$ be the spectrum of $G(\omega)$. Then \eqref{eq:newframe} holds for two constants $0 < A \leq B < \infty$ if and only if
\begin{equation}\label{eq:spectrum}
\Sigma(\omega) \subset \left[\frac{A}{|\Omega|},\frac{B}{|\Omega|}\right] \ \textnormal{a.e. } \omega \in \Omega.
\end{equation}
\end{theo}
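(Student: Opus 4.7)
My plan is to reduce $I_\psi(f)$ to an integral quadratic form on $\Omega$ associated with a matrix field, identify this matrix with $G(\omega)$ using the explicit Gaussian expression for $\widehat{\psi_\theta}$, and then convert the resulting integral frame inequality to the pointwise spectral condition \eqref{eq:spectrum}.

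First I would fix $k \in \{1,\dots,N\}$ and rewrite $W_\psi f(\gamma+\alpha_k,\theta_k) = \langle f, T_\gamma \pi(\alpha_k,\theta_k)\psi\rangle$, with $T_\gamma$ the translation by $\gamma \in \Gamma$. Using \eqref{eq:intertwining}, the identity $e_\nu(\gamma)=1$ for $\nu \in \Gamma^\perp$, and a $\Gamma^\perp$-periodization of the Fourier-side integrand over $\Omega$, one recognizes this inner product as the $\gamma$-th Fourier coefficient of the bracket map $[f,\pi(\alpha_k,\theta_k)\psi]_{\Gamma}$ from \eqref{eq:bracket}. Parseval's formula \eqref{eq:Parseval} then yields
\[
\sum_{\gamma \in \Gamma}|W_\psi f(\gamma+\alpha_k,\theta_k)|^2 = |\Omega| \int_\Omega \left|[f,\pi(\alpha_k,\theta_k)\psi]_{\Gamma}(\omega)\right|^2 d\omega.
\]

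Next I sum over $k$ and expand each squared bracket as a double sum over $\nu,\nu' \in \Gamma^\perp$. Since $\mathrm{supp}(\wh f) \subset B(0,\varrho)$, both sums collapse to $\mathcal{V}_\varrho(\omega)$. Setting $a(\omega) = (\wh{f}(\omega+\nu))_{\nu \in \mathcal{V}_\varrho(\omega)}$ and interchanging the finite sum over $k$ with the double sum, one obtains
\[
I_\psi(f) = |\Omega| \int_\Omega \langle G(\omega) a(\omega), a(\omega)\rangle\, d\omega,
\]
with a Hermitian matrix $G(\omega)$ whose entries are sums over $k$ of products of $\widehat{\psi_{\theta_k}}$-values and complex exponentials in $\alpha_k\cdot(\nu-\nu')$. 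Substituting $\widehat{\psi_{\theta_k}}(\xi) = e^{-2\pi^2\sigma^2|\xi - p_k|^2}$ with $p_k := (p\cos\theta_k,\, p\sin\theta_k)$ and applying the polarization identity $|u|^2 + |v|^2 = \tfrac12|u+v|^2 + \tfrac12|u-v|^2$ to $u = \omega+\nu - p_k$, $v = \omega+\nu' - p_k$ produces precisely the entries \eqref{eq:dualGram}. Plancherel and the support of $\wh f$ similarly give $\|f\|^2_{L^2(\R^2)} = \int_\Omega |a(\omega)|^2 d\omega$.

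With these identifications, \eqref{eq:newframe} is equivalent to
\[
\frac{A}{|\Omega|}\int_\Omega |a(\omega)|^2 d\omega \leq \int_\Omega \langle G(\omega) a(\omega), a(\omega)\rangle\, d\omega \leq \frac{B}{|\Omega|}\int_\Omega |a(\omega)|^2 d\omega
\]
for every $f \in \textsc{pw}(B(0,\varrho))$. The sufficiency of \eqref{eq:spectrum} is immediate from the pointwise spectral bound on $G(\omega)$; the converse is the main obstacle. The argument of \cite{BHP2020} that I intend to follow is to observe that the assignment $f \mapsto a(\cdot)$ is surjective onto the $L^2$-sections of the measurable field $\omega\mapsto \C^{\mathcal{V}_\varrho(\omega)}$, because the collection $\{\omega+\nu : \omega \in \Omega,\ \nu \in \Gamma^\perp,\ |\omega+\nu|<\varrho\}$ partitions $B(0,\varrho)$ up to a null set. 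One may then test the integral inequality with sections supported in arbitrarily small neighborhoods of a Lebesgue point $\omega_0$ of $G(\cdot)$ and with any prescribed constant value $v_0 \in \C^{\mathcal{V}_\varrho(\omega_0)}$. Since $\mathcal{V}_\varrho(\omega)$ is locally constant outside the null set $\bigcup_{\nu \in \Gamma^\perp}\{\omega : |\omega+\nu|=\varrho\}$, this localization upgrades the integral bound to the pointwise spectral bound \eqref{eq:spectrum} for a.e. $\omega$.
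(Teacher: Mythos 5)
Your proposal is correct, and its first and last computational stages coincide with the paper's Steps I and III: the reduction of $I_\psi(f)$ to $|\Omega|\int_\Omega \sum_{k}\left|[f,\phi_k]_\Gamma(\omega)\right|^2 d\omega$ via periodization and Parseval, and the identification of the fiberwise quadratic form with $G(\omega)$ (your parallelogram-law computation with $u=\omega+\nu-p_k$, $v=\omega+\nu'-p_k$ reproduces \eqref{eq:dualGram} exactly). Where you genuinely diverge is in upgrading the integral frame inequality to the pointwise spectral bound \eqref{eq:spectrum}. The paper argues by contradiction: if the pointwise bound fails on a set $H\subset\Omega$ of positive measure, it replaces $f$ by $f_H=\mathcal{F}^{-1}\big(\chi_{H^{\Gamma^\perp}_\varrho}\wh{f}\big)$, which still lies in $\textsc{pw}(B(0,\varrho))$ and whose brackets are $\chi_H\,[f,\cdot]_\Gamma$, so that integrating over $H$ directly contradicts \eqref{eq:integralineq}; this is elementary and uses no differentiation theory. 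You instead invoke surjectivity of $f\mapsto a(\cdot)$ onto the $L^2$-sections of $\omega\mapsto\C^{\mathcal{V}_\varrho(\omega)}$ and localize at Lebesgue points with constant test vectors $v_0$ --- the classical range-function argument in the spirit of \cite{Bownik2008, CabrelliPaternostro2010}. This is valid and has the merit of not depending on the specific form of $G$, but to make it airtight you should add one sentence: the exceptional null set produced by the Lebesgue-point argument depends on $v_0$, so you must run the localization over a countable dense set of test vectors and then use continuity of $v\mapsto\langle G(\omega)v,v\rangle$ in $v$ to obtain the bound for all $v$ simultaneously for a.e.\ $\omega$ (alternatively, observe that here $G(\cdot)$ is continuous on each open set where $\mathcal{V}_\varrho(\cdot)$ is constant, which trivializes the localization).
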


\begin{proof}
\emph{Step I}: we claim that the quantity \eqref{eq:I} can be written, using the notation \eqref{eq:bracket}, and denoting by $\wh{\phi_k} = \F(\pi(\alpha_k,\theta_k)\psi)$, as
\[
I_\psi(f) = |\Omega| \int_\Omega \sum_{k = 1}^N \left| [f,\phi_k]_{\Gamma}(\omega) \right|^2 d\omega .
\]
In order to see this, let us first observe that, by Plancherel's theorem and the relation \eqref{eq:intertwining}, and with the notation $\psi_\theta(x) = \psi(r_{-\theta}(x))$, it holds
\[
I_\psi(f) = \! \sum_{k = 1}^N \sum_{\gamma \in \Gamma} |\langle f , \pi(\gamma + \alpha_k, \theta_k) \psi\rangle_{L^2(\R^2)}|^2
 = \! \sum_{k = 1}^N \sum_{\gamma \in \Gamma} |\langle \wh{f} , e_{\gamma + \alpha_k} \wh{\psi_{\theta_k}}\rangle_{L^2(\R^2)}|^2.
\]
Here, by periodization and the definition of annihilator lattice, we have
\begin{align*}
\langle \wh{f} , e_{\gamma + \alpha_k} \wh{\psi_{\theta_k}}&\rangle_{L^2(\R^2)} = \int_\Omega \sum_{\nu \in \Gamma^\perp} \wh{f}(\omega + \nu) \ol{e_{\gamma + \alpha_k}(\omega + \nu) \wh{\psi_{\theta_k}}(\omega + \nu)} d\omega\\
& = \int_\Omega \left(\sum_{\nu \in \Gamma^\perp} \wh{f}(\omega + \nu) \ol{e_{\alpha_k}(\omega + \nu) \wh{\psi_{\theta_k}}(\omega + \nu)}\right) e_{-\gamma}(\omega) d\omega.
\end{align*}
So, calling $\wh{\phi_k} = e_{\alpha_k}\wh{\psi_{\theta_k}}$, we can write
\[
I_\psi(f) = \sum_{k = 1}^N \sum_{\gamma \in \Gamma} \left| \int_\Omega [f,\phi_k]_{\Gamma}(\omega) e_{-\gamma}(\omega) d\omega \right|^2.
\]
The claim follows then by Parseval's theorem for Fourier series \eqref{eq:Parseval}.

\emph{Step II}: we now provide a direct proof that the inequalities
\begin{equation}\label{eq:integralineq}
A \int_\Omega [f,f]_\Gamma(\omega) d\omega \leq I_\psi(f) \leq B \int_\Omega [f,f]_\Gamma(\omega) d\omega
\end{equation}
with $0 < A \leq B < \infty$ hold for all $f \in \textsc{pw}(B(0,\varrho))$, if and only if the inequalities
\begin{equation}\label{eq:nonintegralineq}
\frac{A}{|\Omega|} [f,f]_\Gamma(\omega) \leq \sum_{k = 1}^N \left| [f,\phi_k]_\Gamma(\omega)\right|^2 \leq \frac{B}{|\Omega|} [f,f]_\Gamma(\omega)
\end{equation}
hold for all $f \in \textsc{pw}(B(0,\varrho))$ and almost all $\omega \in \Omega$. Since \eqref{eq:nonintegralineq} implies \eqref{eq:integralineq}, we just need to prove the other implication. Let then \eqref{eq:integralineq} hold and assume, by contradiction, that the left hand side inequality in \eqref{eq:nonintegralineq} does not hold for a nonzero $f \in \textsc{pw}(B(0,\varrho))$. Let
\[
h(\omega) = \sum_{k = 1}^N \left| [f,\phi_k]_\Gamma(\omega)\right|^2 - \frac{A}{|\Omega|} [f,f]_\Gamma(\omega)
\]
and denote by
\[
H = \{\omega \in \Omega : h(\omega) < 0\}.
\]
By the contradiction hypothesis, $H$ has positive measure. Now, let
\[
H^{\Gamma^\perp}_\varrho = B(0,\varrho) \cap \bigsqcup_{\nu \in \Gamma^\perp} (H + \nu)
\]
where the union is disjoint because $H \subset\Omega$, and observe that, since $f \in \textsc{pw}(B(0,\varrho))$, for any $\phi \in L^2(\R^2)$ we have
\begin{align*}
\chi_H(\omega) & [f,\phi]_\Gamma(\omega) = \sum_{\nu \in \Gamma^\perp} \chi_H(\omega)\wh{f}(\omega + \nu) \ol{\wh{\phi}(\omega + \nu)}\\
& = \sum_{\nu \in \Gamma^\perp} \chi_{H^{\Gamma^\perp}_\varrho}(\omega + \nu)\wh{f}(\omega + \nu) \ol{\wh{\phi}(\omega + \nu)} = [f_H,\phi]_\Gamma(\omega)
\end{align*}
where $f_H : = \mathcal{F}^{-1}\big(\chi_{H^{\Gamma^\perp}_\varrho} \wh{f}\big)$. Thus, by the contradiction hypothesis
\begin{align*}
0 & > \int_H h(\omega)d\omega = \int_H \left(\sum_{k = 1}^N \Big|[f,\phi_k]_\Gamma(\omega)\Big|^2 - \frac{A}{|\Omega|} [f,f]_\Gamma(\omega)\right) d\omega\\
& = \int_\Omega \left(\sum_{k = 1}^N \Big|\chi_H(\omega) [f,\phi_k]_\Gamma(\omega)\Big|^2 - \frac{A}{|\Omega|} \chi_H(\omega) [f,f]_\Gamma(\omega) \chi_H(\omega)\right) d\omega\\
& = \int_\Omega \sum_{k = 1}^N \left| [f_H,\phi_k]_\Gamma(\omega)\right|^2 d\omega - \frac{A}{|\Omega|} \int_\Omega [f_H,f_H]_\Gamma(\omega) d\omega\\
& =  \frac{1}{|\Omega|} \left(I_\psi(f_H) - A \int_\Omega [f_H,f_H]_\Gamma(\omega) d\omega\right)
\end{align*}
where the last identity was proved in Step I.
This contradicts \eqref{eq:integralineq} because $f_H$ belongs to $\textsc{pw}(B(0,\varrho))$, since $H^{\Gamma^\perp}_\varrho \subset B(0,\varrho)$. A similar argument applies to the right hand side inequality.

\emph{Step III}: in order to complete the proof we only need to show that if, for a fixed $\omega \in \Omega$, we denote by $z \in \C^{\mathcal{V}_\varrho(\omega)}$ the vector with components $z_\nu = \wh{f}(\omega + \nu)$, with $\nu \in \mathcal{V}_\varrho(\omega)$, then\vspace{-.5ex}
\[
\sum_{k = 1}^N \left| [f,\phi_k]_\Gamma(\omega) \right|^2 = \langle z, G(\omega) z\rangle_{\C^{\mathcal{V}(\omega)}}.\vspace{-.5ex}
\]
This is done by direct computation:
\begin{align*}
\sum_{k = 1}^N & \left| [f,\phi_k]_\Gamma(\omega) \right|^2 = \sum_{k = 1}^N \left| \sum_{\nu \in \Gamma^\perp} \wh{f}(\omega + \nu) \ol{\wh{\phi_k}(\omega + \nu)}\right|^2\\
& = \sum_{k = 1}^N \sum_{\nu, \nu' \in \Gamma^\perp} \wh{f}(\omega + \nu)\ol{\wh{f}(\omega + \nu')} \ol{\wh{\phi_k}(\omega + \nu)}\wh{\phi_k}(\omega + \nu')\\
& =  \sum_{\nu \in \Gamma^\perp} \wh{f}(\omega + \nu) \left(\sum_{\nu' \in \Gamma^\perp} \left(\sum_{k = 1}^N\ol{\wh{\phi_k}(\omega + \nu)}\wh{\phi_k}(\omega + \nu')\right) \ol{\wh{f}(\omega + \nu')}\right)\\
& = \sum_{\nu \in \mathcal{V}(\omega)} \wh{f}(\omega + \nu) \left(\sum_{\nu' \in \mathcal{V}(\omega)} G(\omega)_{\nu,\nu'} \ol{\wh{f}(\omega + \nu')}\right) = \langle z, G(\omega) z\rangle_{\C^{\mathcal{V}(\omega)}}.
\end{align*}
where the sums are reduced to $\mathcal{V}(\omega)$ because $f \in \textsc{pw}(B(0,\varrho))$, and
\begin{align*}
G(\omega)_{\nu,\nu'} & = \sum_{k = 1}^N \ol{\wh{\phi}_k(\omega + \nu)}\wh{\phi}_k(\omega + \nu')\\
& = \sum_{k = 1}^N e_{\alpha_k}(\nu - \nu') e^{-2\pi^2\sigma^2\Big(|\omega + \nu - \binom{p\cos\theta_k}{p\sin\theta_k}|^2 + |\omega + \nu' - \binom{p\cos\theta_k}{p\sin\theta_k}|^2\Big)}.
\end{align*}
Thus, \eqref{eq:nonintegralineq} is the same as \eqref{eq:spectrum}, while the left and right hand sides of \eqref{eq:integralineq} are multiples of the square norm of $f$, because of \eqref{eq:bracketintegral}. This completes the proof that \eqref{eq:newframe} is equivalent to \eqref{eq:spectrum}.
\end{proof}

The characterization of frames in terms of condition \eqref{eq:spectrum} does not provide an explicit condition relating the parameters $p, \sigma$, the values $\{(\alpha_k, \theta_k)\}_{k = 1}^N$ and the size of the lattice $\Gamma$ with the frame constants $A, B$. However, it reduces the question of whether a certain set of values for the sampling problem defines a frame to the evaluation of the spectra of the collection of finite dimensional matrices $G(\omega)$, as $\omega$ varies in $\Omega$. These spectra, and in particular the maximum and the minimum eigenvalues of $G(\omega)$, can be evaluated numerically, and this will be done in \S \ref{sec:numerics} for some meaningful cases. However, at this stage, it is not clear how to find parameters that may lead to frames, and what to expect from the corresponding condition numbers
\[
\kappa = \frac{B}{A} = \frac{\max_{\omega \in \Omega}\max(\Sigma(\omega))}{\min_{\omega \in \Omega}\min(\Sigma(\omega))}.
\]
To get insights on this, we now consider a similar, but simpler, problem.

\newpage
\subsection{Sufficient conditions for frequency cutoff}\label{sec:simpler}

Suppose that, instead of a mother wavelet \eqref{eq:gaussian}, we consider a mother wavelet given by
\begin{equation}\label{eq:simplifiedgaussian}
\wh{\varphi}(\xi) = e^{-2\pi^2\sigma^2|\xi - \binom{p}{0}|^2} \chi_{B_L}(\xi)
\end{equation}
for $B_L = \displaystyle B\left(\binom{p}{0},\frac{L}{2}\right) = \left\{\xi \in \R^2 : \left|\xi - \binom{p}{0}\right| < \frac{L}{2} \right\}$.

\vspace{0.5ex}
With this frequency cutoff, we can rely on a Parseval frame of exponentials on the support of each rotate $\wh{\varphi_{\theta_k}}$ of $\wh{\varphi}$. A sufficient condition for \eqref{eq:newframe} can then be obtained by requiring these supports to cover the whole $B(0,\varrho)$. This elementary technique provides explicit results for a problem that can be close enough to the original problem (see also Figure \ref{fig:croppedGabor}) to provide useful heuristics for the numerical simulations.

\vspace{0.5ex}
For $\theta \in S^1$ let $Q_\theta$ be the square circumscribed to the support of $\wh{\varphi_\theta}$:
\[
Q_\theta : = \displaystyle\left\{ \xi \in \R^2 : \Big\|\xi - \binom{p\cos\theta}{p\sin\theta}\Big\|_\infty < \frac{L}{2} \right\}.
\]
This is a square of side length $L$, so an orthonormal basis for $L^2(Q_\theta)$ is given by $\displaystyle\left\{\frac{1}{L}e_\gamma \, : \, \gamma \in \frac{1}{L}\Z^2\right\}$. By Parseval's theorem, for all $f \in L^2(\R^2)$, all $\alpha \in \R^2$, and all $\theta \in S^1$ we can write
\begin{align*}
\int_{Q_\theta} |\wh{f}(\xi) \ol{\wh{\varphi_\theta}(\xi)}|^2 d\xi & = \|e_{-\alpha}\wh{f} \ol{\wh{\varphi_\theta}}\|_{L^2(Q_\theta)}^2 = \frac{1}{L^2} \sum_{\gamma \in \frac{1}{L}\Z^2} |\langle e_{-\alpha}\wh{f} \ol{\wh{\varphi_\theta}}, e_{\gamma} \rangle_{L^2(Q_\theta)}|^2\\
& = \frac{1}{L^2} \sum_{\gamma \in \frac{1}{L}\Z^2} |\langle \wh{f}, e_{\gamma + \alpha} \wh{\varphi_\theta}\rangle_{L^2(\R^2)}|^2.
\end{align*}
Thus, if we choose $N$ shifts and angles $\{(\alpha_k, \theta_k)\}_{k = 1}^N \subset \R^2 \times S^1$, and we consider $f \in \textsc{PW}(B(0,\varrho))$, we get
\begin{align*}
I_\varphi(f) & = \sum_{k = 1}^N \sum_{\gamma \in \frac{1}{L} \Z^2} |\langle \wh{f}, e_{\gamma + \alpha_k} \wh{\varphi_{\theta_k}}\rangle_{L^2(\R^2)}|^2 = L^2 \sum_{k = 1}^N \int_{Q_{\theta_k}} |\wh{f}(\xi)|^2 |\wh{\varphi_{\theta_k}}(\xi)|^2 d\xi\\
& = L^2 \sum_{k = 1}^N \int_{B(0,\varrho) \cap B\Big(\binom{p\cos\theta_k}{p\sin\theta_k},\frac{L}{2}\Big)} |\wh{f}(\xi)|^2 |\wh{\varphi_{\theta_k}}(\xi)|^2 d\xi
\end{align*}
where the last identity is just making explicit the support of $\wh{f}\wh{\varphi_{\theta_k}}$. Now,
since the maximum and the minimum values of $|\wh{\varphi_{\theta_k}}(\xi)|^2$ on the domain of integration $D_k := B(0,\varrho) \cap B\Big(\binom{p\cos\theta_k}{p\sin\theta_k},\frac{L}{2}\Big)$ satisfy
\[
\min_{\xi \in D_k} |\wh{\varphi_{\theta_k}}(\xi)|^2 \geq e^{-L^2\pi^2\sigma^2} \, , \ \max_{\xi \in D_k} |\wh{\varphi_{\theta_k}}(\xi)|^2 \leq 1 \, ,
\]
we get (in a similar fashion to \cite[Cor. 4]{BHM2017})
\[
L^2 e^{-L^2\pi^2\sigma^2} \sum_{k = 1}^N \int_{D_k} |\wh{f}(\xi)|^2 d\xi \leq I_\varphi(f) \leq L^2 \sum_{k = 1}^N \int_{D_k} |\wh{f}(\xi)|^2 d\xi.
\]
This is sufficient to obtain a frame condition whenever the domains $\{D_k\}_{k = 1}^n$ cover the support of $f$, that we can state as follows.
\begin{prop}\label{prop:simpler}
Let $p, \sigma, L, \varrho > 0$, let $\{\theta_k\}_{k = 1}^N \subset S^1$ and let $m, M \in \Z$ be such that
\begin{equation}\label{eq:covering}
m \leq \sum_{k = 1}^N \chi_{B\Big(\binom{p\cos\theta_k}{p\sin\theta_k},\frac{L}{2}\Big)}(\xi) \leq M \quad \textnormal{a.e. } \ \xi \in B(0,\varrho).
\end{equation}
Let $\varphi$ be as in \eqref{eq:simplifiedgaussian}, let $\Gamma = \frac{1}{L}\Z^2$ and let $I_\varphi$ be as in \eqref{eq:I}.
Then, for all $f \in \textsc{PW}(B(0,\varrho))$ and for all $\{\alpha_k\}_{k = 1}^N \subset \R^2$ it holds
\[
m L^2 e^{-L^2\pi^2\sigma^2} \|f\|_{L^2(\R^2)}^2 \leq I_\varphi(f) \leq M L^2 \|f\|_{L^2(\R^2)}^2.
\]
In particular, if $m > 0$, then \eqref{eq:newframe} holds with $\frac{B}{A} \leq \frac{M}{m} e^{L^2\pi^2\sigma^2}$.
\end{prop}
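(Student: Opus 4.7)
The plan is essentially to finish the computation that the paper has already begun: the discussion immediately preceding the proposition uses a Parseval basis of exponentials $\{L^{-1}e_\gamma\}_{\gamma \in \frac{1}{L}\Z^2}$ on each square $Q_{\theta_k}$ circumscribing $\mathrm{supp}(\wh{\varphi_{\theta_k}})$, together with the uniform pointwise estimates $e^{-L^2\pi^2\sigma^2} \leq |\wh{\varphi_{\theta_k}}(\xi)|^2 \leq 1$ on $D_k = B(0,\varrho)\cap B(\binom{p\cos\theta_k}{p\sin\theta_k},L/2)$, to establish
\[
L^2 e^{-L^2\pi^2\sigma^2} \sum_{k=1}^N \int_{D_k} |\wh{f}(\xi)|^2 d\xi \leq I_\varphi(f) \leq L^2 \sum_{k=1}^N \int_{D_k} |\wh{f}(\xi)|^2 d\xi.
\]
The only remaining task is to convert the common quantity $\sum_k \int_{D_k}|\wh{f}|^2$ into a multiple of $\|f\|_{L^2(\R^2)}^2$ by invoking the covering hypothesis \eqref{eq:covering}.

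First I would exchange the finite sum with an integral by interpreting it as an integral against a multiplicity function. Since $f \in \textsc{pw}(B(0,\varrho))$ the Fourier transform $\wh{f}$ vanishes almost everywhere outside $B(0,\varrho)$, so replacing each $\chi_{D_k}$ by $\chi_{B(\binom{p\cos\theta_k}{p\sin\theta_k},L/2)}$ leaves the integrals unchanged, giving
\[
\sum_{k=1}^N \int_{D_k} |\wh{f}(\xi)|^2 d\xi = \int_{\R^2} |\wh{f}(\xi)|^2 \Bigl(\sum_{k=1}^N \chi_{B(\binom{p\cos\theta_k}{p\sin\theta_k},L/2)}(\xi)\Bigr) d\xi.
\]
Next I would apply \eqref{eq:covering} on the support of $\wh{f}$ to sandwich the multiplicity function between $m$ and $M$, producing
\[
m \|\wh{f}\|_{L^2(\R^2)}^2 \leq \sum_{k=1}^N \int_{D_k} |\wh{f}(\xi)|^2 d\xi \leq M \|\wh{f}\|_{L^2(\R^2)}^2,
\]
and Plancherel's theorem identifies $\|\wh{f}\|_{L^2(\R^2)} = \|f\|_{L^2(\R^2)}$. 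Substituting into the chain from the previous paragraph yields the two stated inequalities; if $m>0$, dividing the upper constant by the lower constant immediately gives $B/A \leq (M/m)e^{L^2\pi^2\sigma^2}$, which is \eqref{eq:newframe}.

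I do not anticipate any real obstacle: the delicate content, namely the Parseval frame on each square $Q_{\theta_k}$ and the elementary but essential pointwise estimates on $|\wh{\varphi_{\theta_k}}|^2$ over $D_k$, has already been supplied in the text. All that the proof contributes beyond this is a Fubini-style rewriting of a finite sum of integrals as a single integral weighted by the covering multiplicities, followed by one application of Plancherel.
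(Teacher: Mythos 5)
Your proposal is correct and follows essentially the same route as the paper: the paper's own argument is precisely the Parseval-basis computation on the circumscribed squares $Q_{\theta_k}$ together with the pointwise bounds $e^{-L^2\pi^2\sigma^2} \leq |\wh{\varphi_{\theta_k}}|^2 \leq 1$ on $D_k$, with the final covering step left implicit in the sentence preceding the proposition. Your explicit rewriting of $\sum_k \int_{D_k}|\wh{f}|^2$ as $\int |\wh{f}|^2 \sum_k \chi_{B(\binom{p\cos\theta_k}{p\sin\theta_k},L/2)}$ on $\mathrm{supp}(\wh{f}) \subset B(0,\varrho)$, followed by \eqref{eq:covering} and Plancherel, is exactly the intended missing step.
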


\begin{rem}\label{rem:alpha}
The shifts $\{\alpha_k\}_{k =1}^N$ play no role in this sufficient condition for frames. They have been added artificially at the beginning, but are not necessary at any stage.
\end{rem}
\begin{rem}\label{rem:heuristics}
Condition \eqref{eq:covering} is a covering condition that ideally replaces the Calder\'on's condition \eqref{eq:Calderon}, but has some intrinsic limitations. In order to have $m > 0$ at $\xi = 0$, $L$ must be at least $2p$, as in Figure \ref{fig:croppedGabor}, while $m > 0$ at $|\xi| = \varrho$ requires $\varrho < p + \frac{L}{2}$, this being an inadmissible value for $\varrho$ even in the limit $k \to \infty$ since then $M$ would diverge.
\end{rem}
\begin{rem}
For the motivating problem in vision, since the receptive field of a neuron is physically compactly supported, its Fourier transform can't be, so $\varphi$ does not provide a completely consistent model. From the practical point of view of fitting the measurements reported in Figure \ref{fig:receptivefields}, $L$ should be large enough to preserve a shape that is similar to a modulated gaussian (see Figure \ref{fig:croppedGabor} as an illustrative example).
\end{rem}

\begin{figure}[h!]
\centering
\includegraphics[width=.78\textwidth]{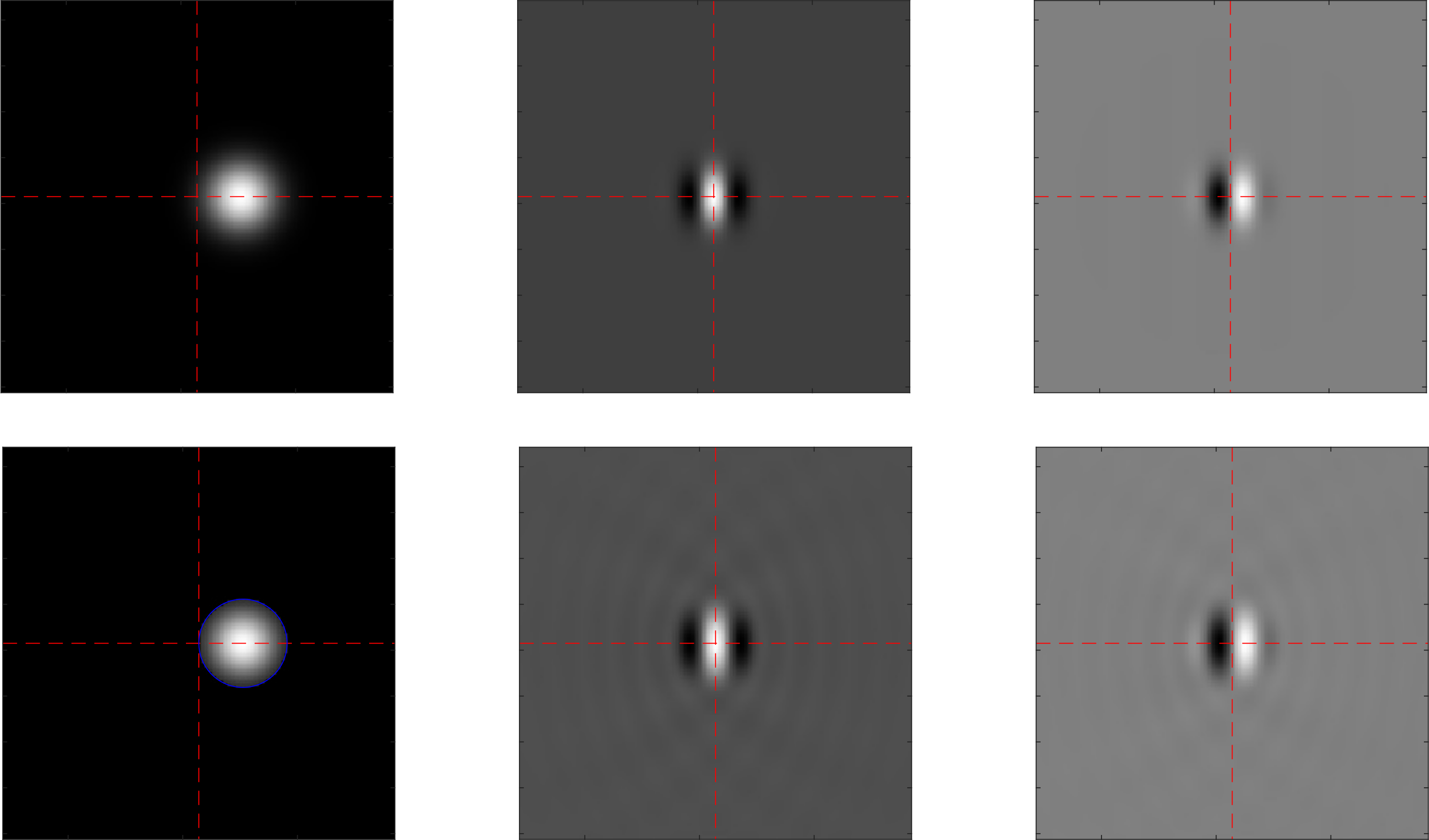}
\caption{Top: Fourier transform, real and imaginary part of $\psi$ with $p \sigma = 0.3$. Bottom: Fourier transform, real and imaginary part of $\varphi$ with $L = 2p$ (tangent to the axes, and retaining about 80\% of the mass of the original gaussian).}\label{fig:croppedGabor}
\end{figure}

\section{Numerical exploration of the parameters space}\label{sec:numerics}

In this section we present numerical results for the spectra of the matrices $G(\omega)$ from \eqref{eq:dualGram}, with $\omega \in \Omega$. We have chosen to consider only the lattice $\Gamma = \Z^2$ which, in the units of \S \ref{sec:simpler}, corresponds to $L = 1$, and whose annihilator is again $\Gamma^\perp = \Z^2$. For simplicity, we have also considered  $\Omega$ to be the unit square centered at the origin $\Omega = [-\frac12, \frac12]^2$.\\
Since we have no reason to sample $S^1$ in a non-uniform way, we propose numerics for equally spaced angles $\{\theta_k\}_{k = 1}^N$. Moreover, since we have no theoretical argument to tune the lattice shifts $\{\alpha_k\}_{k = 1}^N$, the computed spectra are averaged over 20 repetitions of independent random choices for each of such shifts from a uniform distribution in $(0,1)$.

\vspace{.5ex}

The tested parameters are thus $p, \sigma$, that define the mother wavelet, $\varrho$, that defines the Paley Wiener space, and the number of angles $N$. The purpose of this numerical analysis is to relate them with the frame condition \eqref{eq:newframe} through the characterization provided by \eqref{eq:spectrum}.

\vspace{.5ex}

The central object is the spectrum of $G(\omega)$, and, in particular,\vspace{-.5ex}
\[
A(\omega) = \max \Sigma(\omega) \, , \ B(\omega) = \min \Sigma(\omega).\vspace{-.5ex}
\]
Observe also that, since the rank of $G(\omega)$ is at most $N$, for \eqref{eq:spectrum} to hold with $A > 0$, the number of angles $N$ must satisfy\vspace{-.6ex}
\[
N \geq \max_{\omega \in \Omega} n(w)\vspace{-.6ex}
\]
where $n(\omega) = |\mathcal{V}(\omega)|$ is the number of lattice points inside a ball of center $\omega$ and radius $\varrho$.
For these reasons, we show the average of $A(\omega), B(\omega)$ over the described randomization of shifts, together with the number $n(\omega)$, as $\omega$ varies in $\Omega$.

\vspace{.5ex}

For each experiment, in order to visualize the parameters $p, \sigma$, we show the real and imaginary parts of $\psi$ in a scale-free figure, thus representing only the shape of $\psi$: this is characterized by the number of oscillations in a standard deviation, or equivalently by the product $p\sigma$. This quantity was actually measured in V1: notably \cite{Ringach2002} shows that, in macaques, whilst there is a more represented shape corresponding to $p\sigma \approx 0.315$, different shapes are present (see also \cite{BCS2014,  B2015}).

\vspace{.5ex}

For the visualization of the Paley-Wiener space in comparison to the other quantities, we also show the semidiscrete Calder\'on's function\vspace{-.6ex}
\[
C_\psi(\xi) = \sum_{k = 1}^N |\wh{\psi_\theta}(\xi)|^2\vspace{-.6ex}
\]
and its reciprocal, constrained to the Paley-Wiener ball $\displaystyle \frac{\chi_{B(0,\varrho)}(\xi)}{C_\psi(\xi)}$. This allows one to quantify the condition number for a semidiscrete $SE(2)$ transform with just angle discretization, that behaves similarly to what seen in \S \ref{sec:Calderon}, hence providing simultaneous information on $\varrho, \psi$ and $N$.

\vspace{.5ex}
In all experiments, $A(\omega), B(\omega), C_\psi$ and $\chi_B/C_\psi$ are plotted in logarithmic scale with base 10, and $\Omega$ is sampled into $256 \times 256$ points.

\vspace{.5ex}

To find parameters that produce frames, we start by the heuristics provided by Proposition \ref{prop:simpler} and Remark \ref{rem:heuristics}. In Figure \ref{fig:SIM1} we consider $p = \frac{L}{2} = 0.5$, with a shape factor $p\sigma = \frac{1}{\pi}$. If we take $N = 4$ angles, condition \eqref{eq:covering} shows that, for $m > 0$, one needs $\varrho \leq \frac{1}{\sqrt{2}}$. Choosing this limiting value, by Proposition \ref{prop:simpler} one may expect a condition number $\kappa \leq 2e^4$. Indeed, we obtain $\kappa \approx 51$, which is about half that number.\vspace{-1ex}
\begin{figure}[h!]
\centering
\includegraphics[width=.68\textwidth]{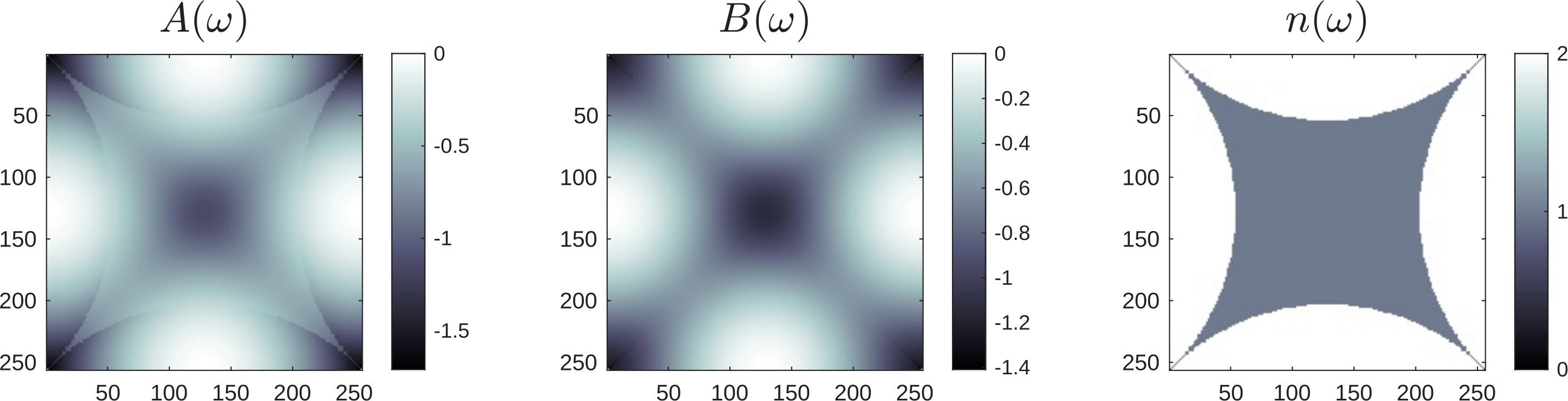}
\includegraphics[width=.68\textwidth]{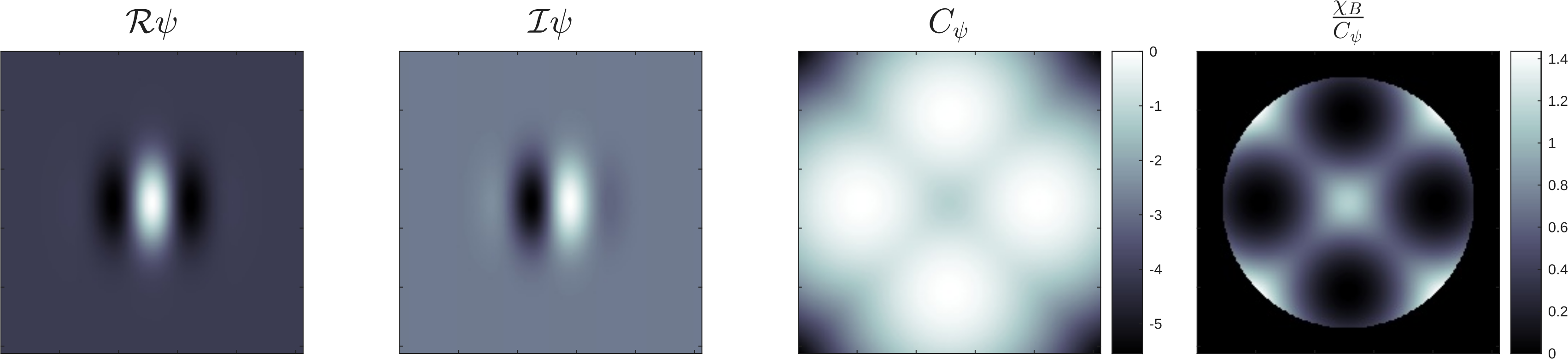}
\caption{Numerical analysis for $p = 0.5, \sigma = 2/\pi, \varrho = 1/\sqrt{2}, N = 4$.}\label{fig:SIM1}
\end{figure}

\vspace{-1.5ex}
The previous values are conservative: the same mother wavelet with $N = 7$ angles produces a frame for $\varrho = 1$ (an unachievable limit for Remark \ref{rem:heuristics}) with a condition number of $\kappa \approx 184$, see Figure \ref{fig:SIM2}.\vspace{-1ex}
\begin{figure}[h!]
\centering
\includegraphics[width=.68\textwidth]{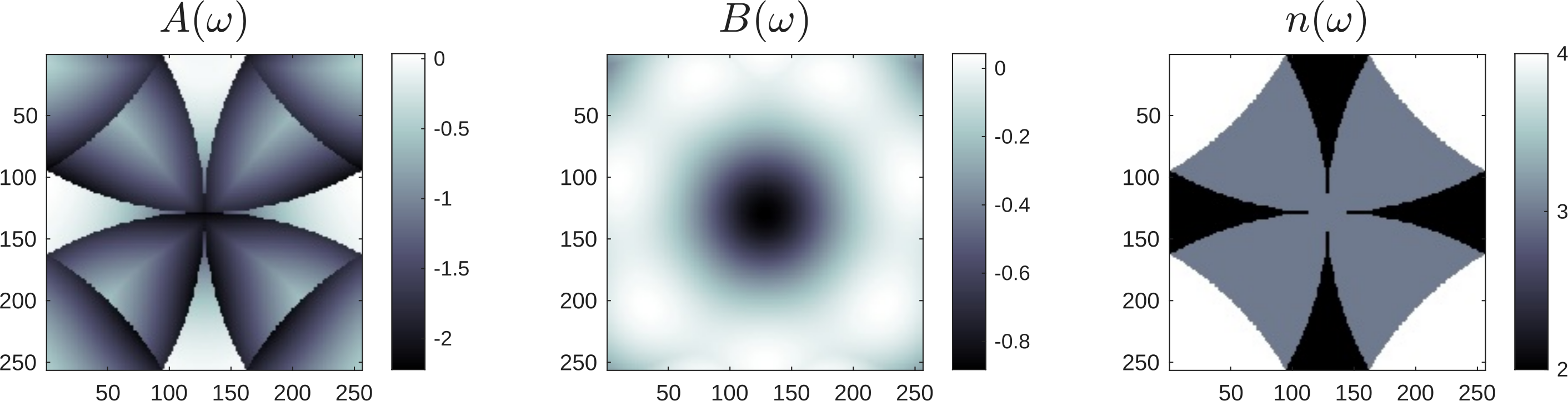}
\includegraphics[width=.68\textwidth]{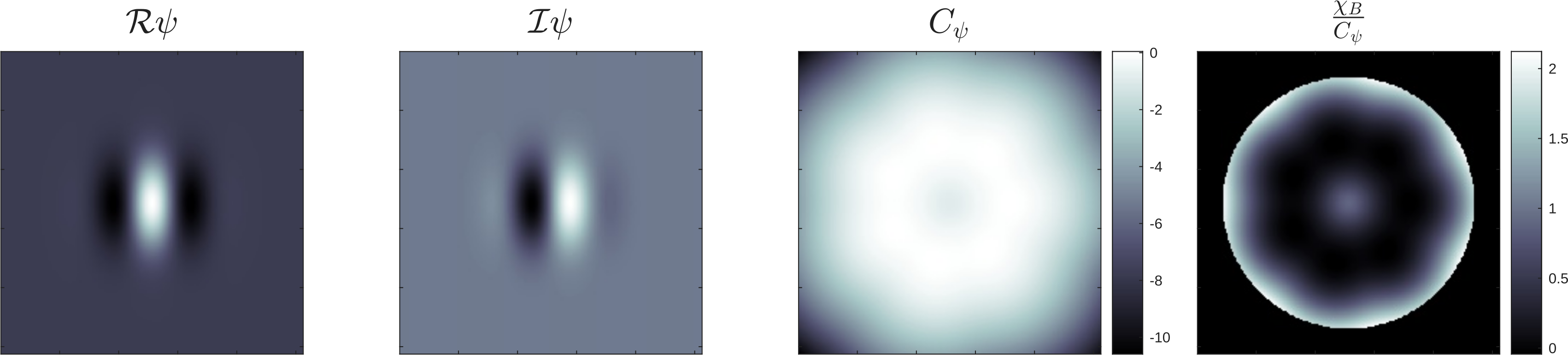}
\caption{Numerical analysis for $p = 0.5, \sigma = 2/\pi, \varrho = 1, N = 7$.}\label{fig:SIM2}
\end{figure}

\vspace{-1.5ex}
Again the same mother wavelet can give a frame for the even larger Paley-Wiener space $\varrho = 1.618$, where $\max_{\omega \in \Omega} n(\omega) = 12$. Using $N = 14$ angles, one gets a finite, but very high, $\kappa \approx 2\cdot 10^9$, see Figure \ref{fig:SIM3}.\vspace{-1ex}
\begin{figure}[h!]
\centering
\includegraphics[width=.68\textwidth]{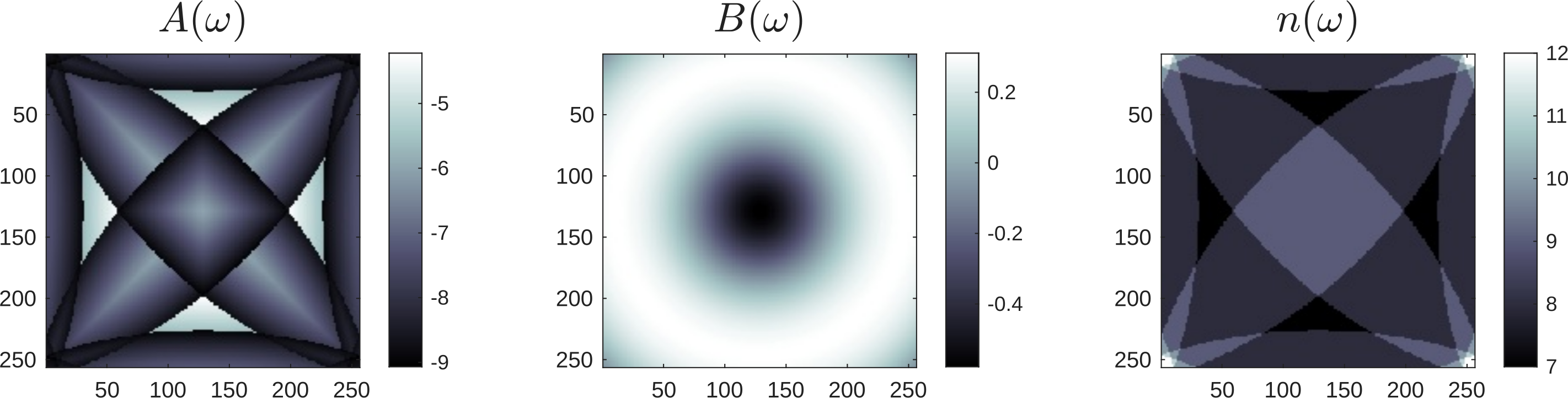}
\includegraphics[width=.68\textwidth]{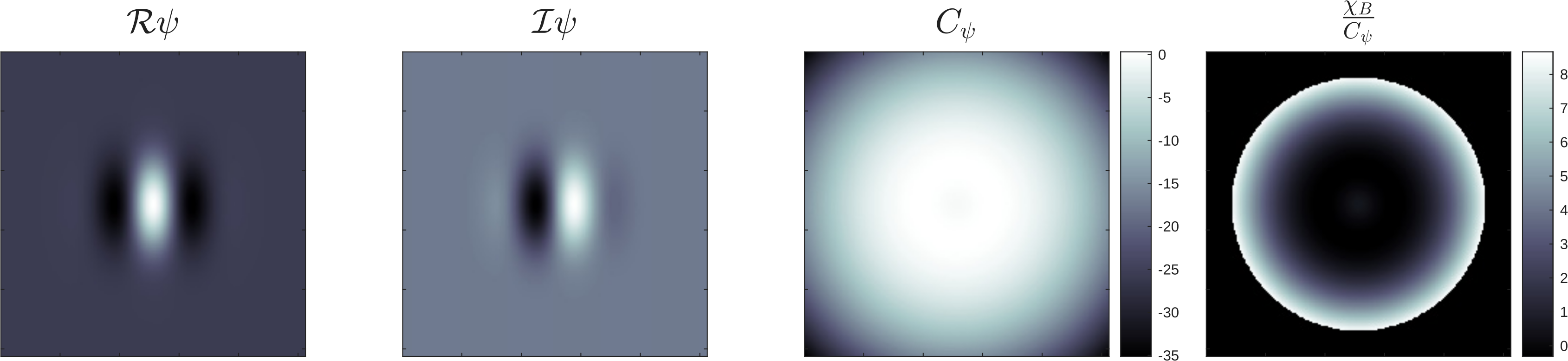}
\caption{Numerical analysis for $p = 0.5, \sigma = 2/\pi, \varrho = 1.618, N = 14$.}\label{fig:SIM3}
\end{figure}

\newpage

One can also move away from the condition $p \leq \frac{L}{2}$ and still get a frame: with $p = 0.7$, and $p\sigma = 0.2$, for the same $\varrho = 1.618$ and $N = 14$ as before, one actually gets a much lower $\kappa \approx 426$, see Figure \ref{fig:SIM4}.
\begin{figure}[h!]
\centering
\includegraphics[width=.68\textwidth]{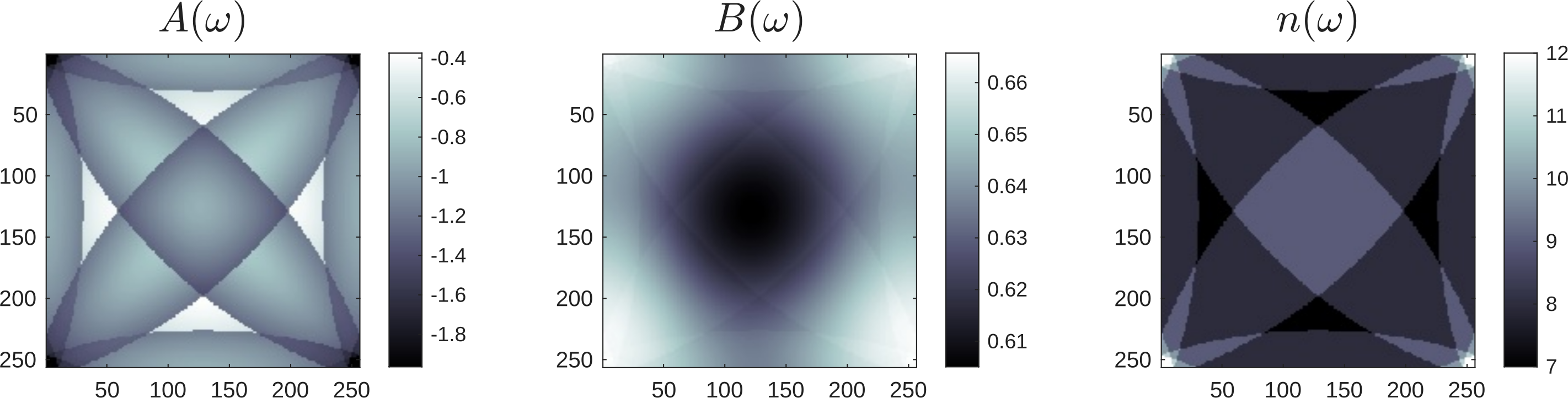}
\includegraphics[width=.68\textwidth]{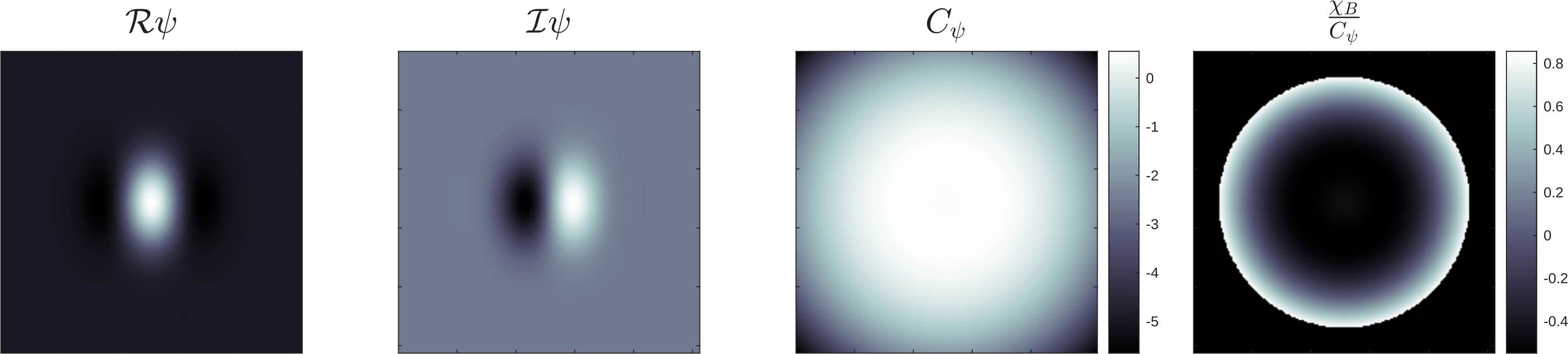}
\caption{Numerical analysis for $p = 0.7, \sigma \approx 0.29, \varrho = 1.618, N = 14$.}\label{fig:SIM4}
\end{figure}

For $p = 0.75$ and the same shape factor $p\sigma = 0.2$, one can get frames for $\varrho = 2$. Since here $\max_{\omega \in \Omega} n(\omega) = 14$, we have used $N = 18$ and obtained $\kappa \approx 1.6\cdot 10^3$, see Figure \ref{fig:SIM5}.
\begin{figure}[h!]
\centering
\includegraphics[width=.68\textwidth]{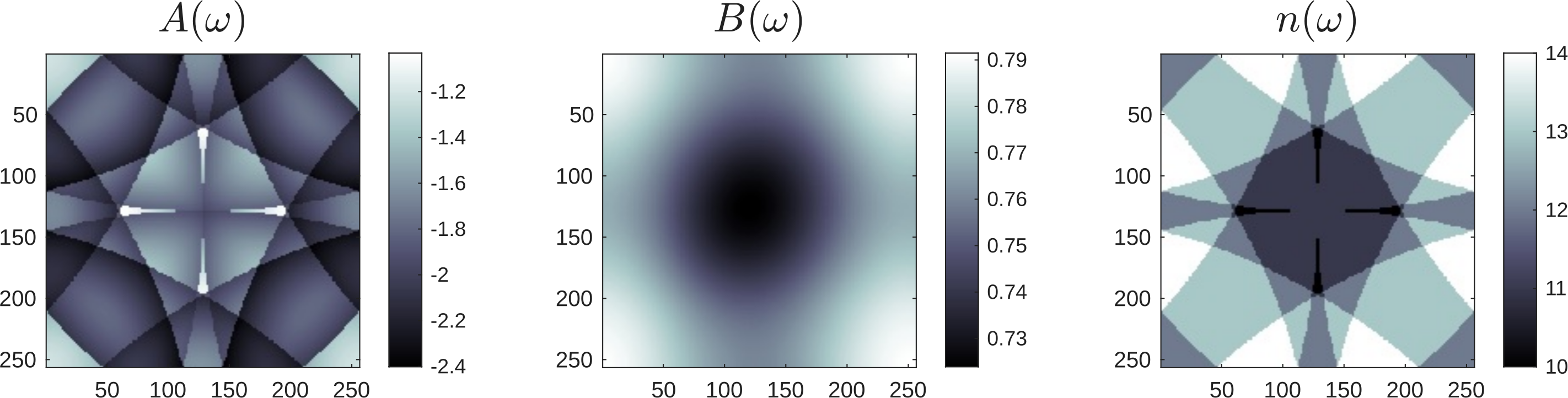}
\includegraphics[width=.68\textwidth]{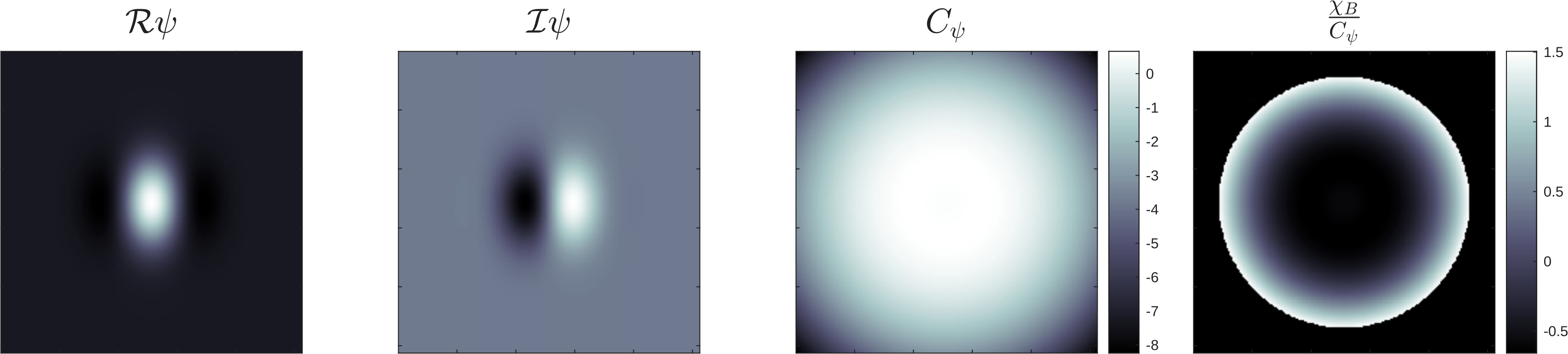}
\caption{Numerical analysis for $p = 0.75, \sigma \approx 0.27, \varrho = 2, N = 18$.}\label{fig:SIM5}
\end{figure}

A frame for a larger shape factor $p\sigma = 0.5$ is obtained with $p = 0.8$ and $N = 12$ on the Paley-Wiener space for $\varrho = \sqrt{2}$, see Figure \ref{fig:SIM5}. This produces a condition number $\kappa \approx 3.8\cdot 10^3$.
\begin{figure}[h!]
\centering
\includegraphics[width=.68\textwidth]{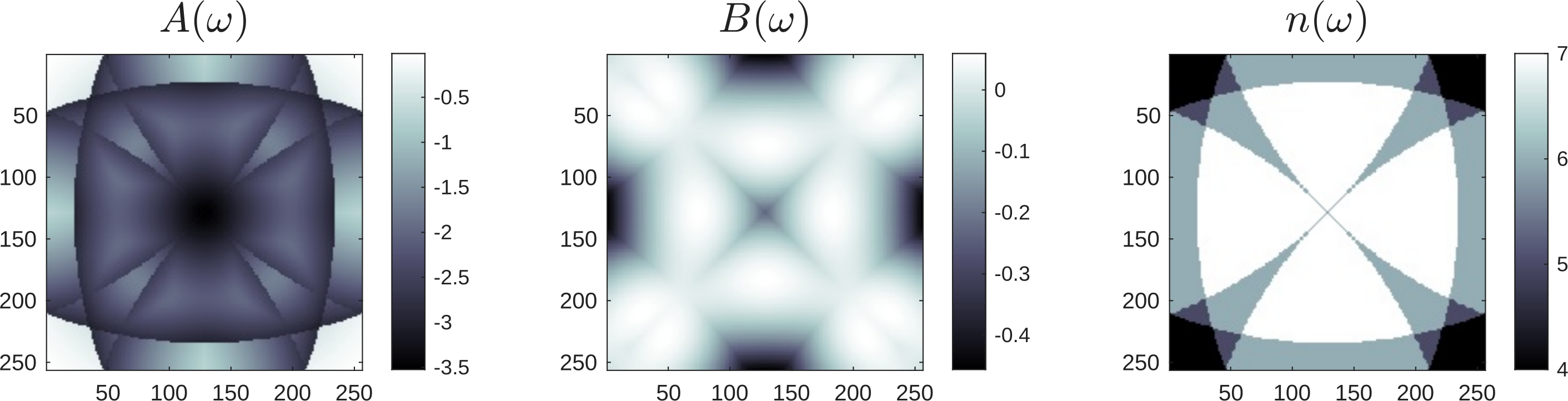}
\includegraphics[width=.68\textwidth]{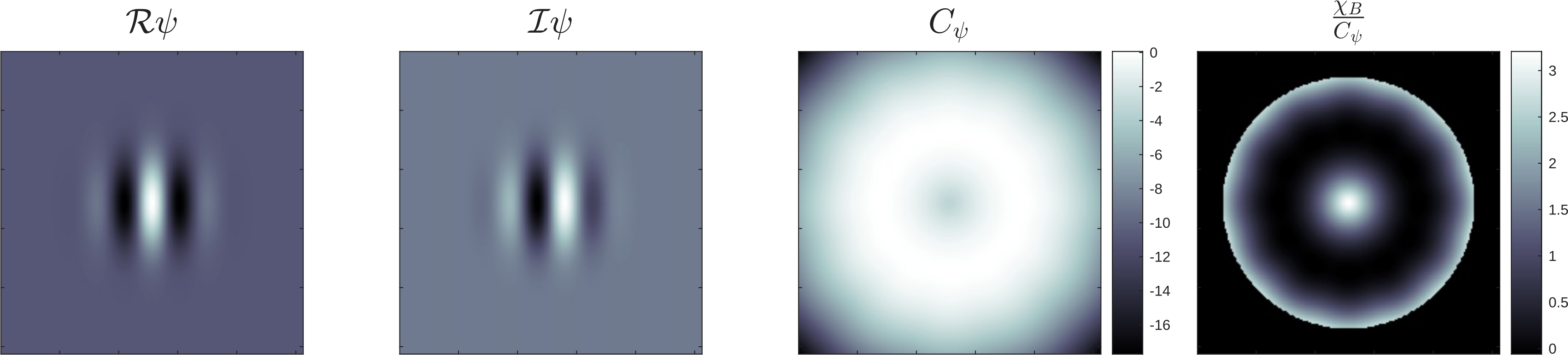}
\caption{Numerical analysis for $p = 0.8, \sigma = 0.625, \varrho = \sqrt{2}, N = 12$.}\label{fig:SIM6}
\end{figure}

\newpage

As a last set of parameters, we show in Figure \ref{fig:SIM7} a regime that differs substantially from Proposition \ref{prop:simpler}, with $p = 1.4$, to fill the Paley-Wiener space for $\varrho = 3$. With a shape factor of $p\sigma = 0.315$ and $N = 100$ angles, one obtains a condition number $\kappa \approx 526$. Here the semidiscrete Calder\'on's function is similar to other cases even for lower values of $N$, but we have observed that the condition number lowers by increasing $N$ well above the threshold $\max_{\omega \in \Omega}n(\omega) = 32$.
\begin{figure}[h!]
\centering
\includegraphics[width=.68\textwidth]{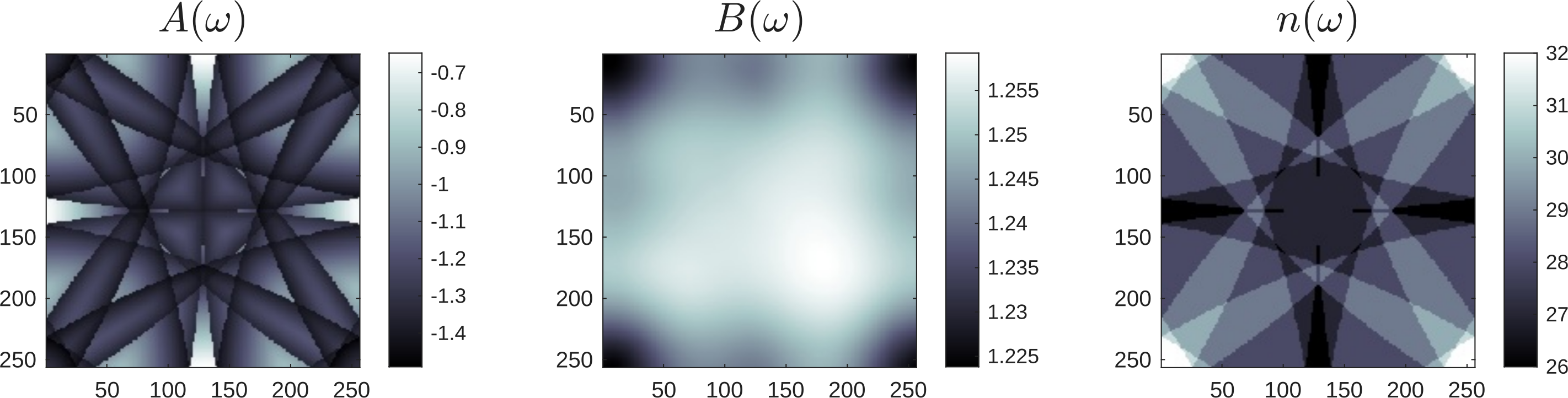}
\includegraphics[width=.68\textwidth]{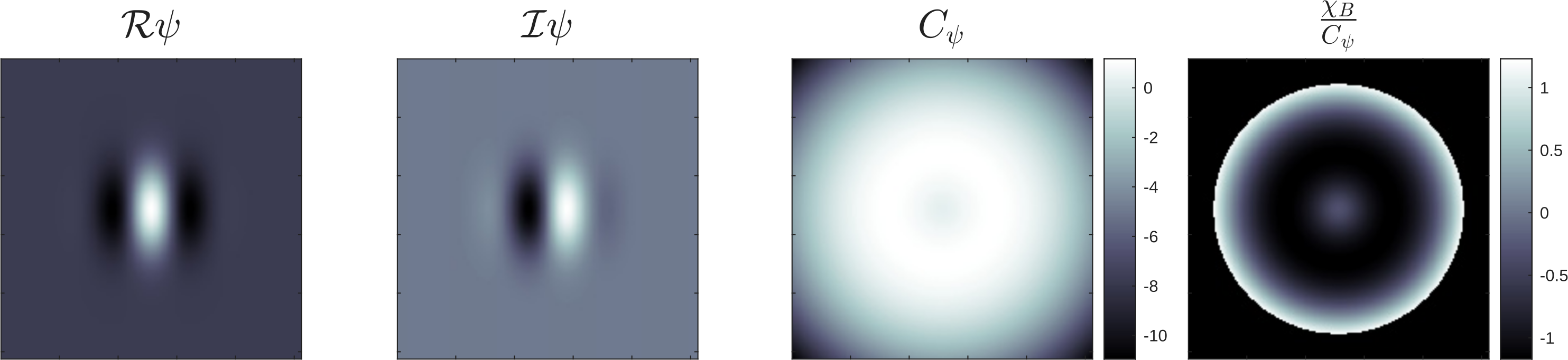}
\caption{Numerical analysis for $p = 1.4, \sigma = 0.225, \varrho = 3, N = 100$.}\label{fig:SIM7}
\end{figure}

Parameters that give negative results are easier to find. If $\varrho$ is much larger than $p$, for example $\varrho = 10$ and $p = 1$, even for a small $\sigma$ (compared to $\frac{1}{L} = 1$) such as $\sigma = 0.1$, that produces large gaussian bells in the Fourier domain, no usable frames are achievable. For these values, $\max_{\omega \in \Omega}n(\omega) = 319$, but, even with $N = 400$ angles, most matrices $G(\omega)$ are practically singular, see Figure \ref{fig:SIM8}.
\begin{figure}[h!]
\centering
\includegraphics[width=.68\textwidth]{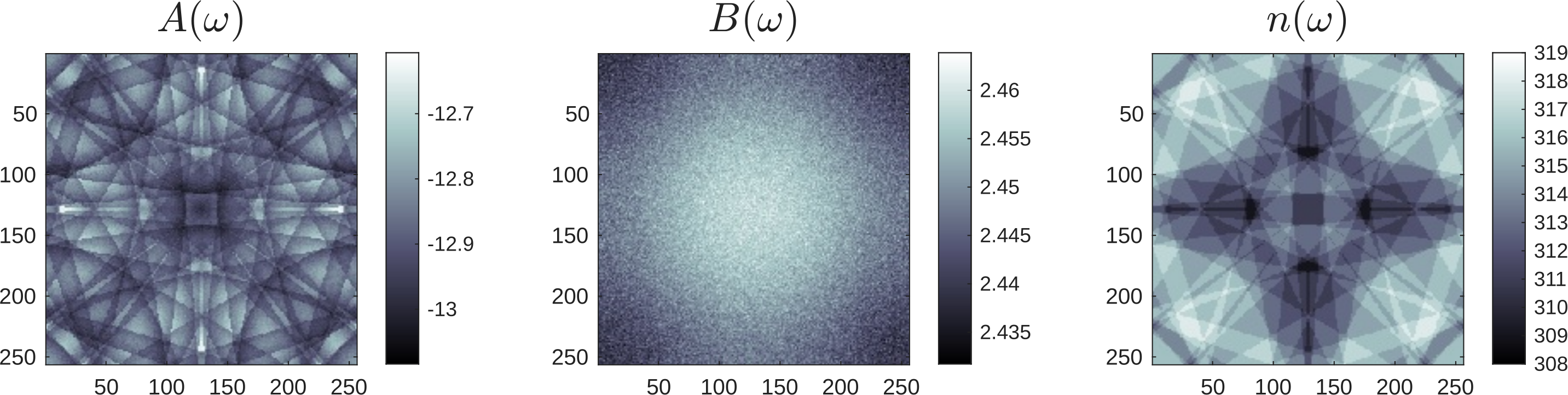}
\includegraphics[width=.68\textwidth]{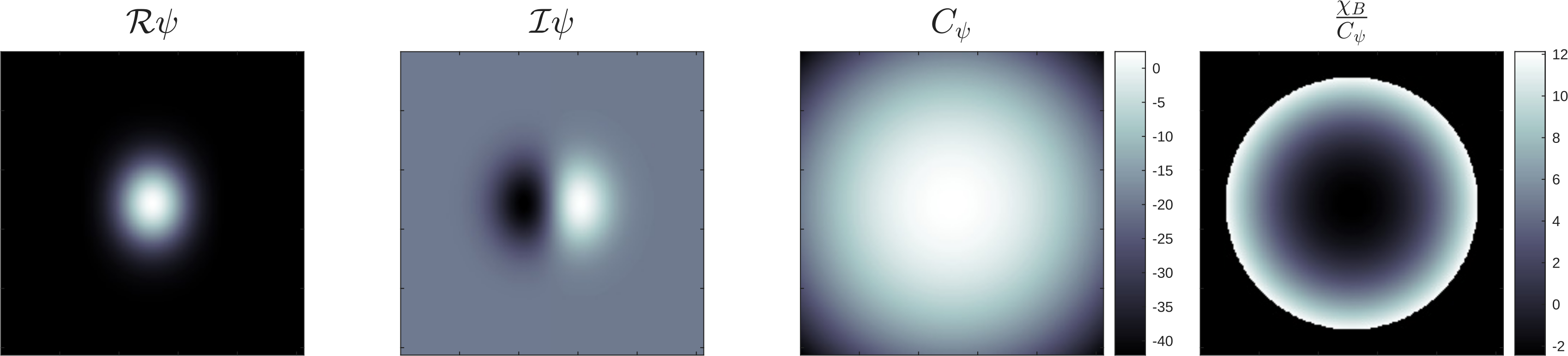}
\caption{Numerical analysis for $p = 1, \sigma = 0.1, \varrho = 10, N = 400$.}\label{fig:SIM8}
\end{figure}

\section{Conclusions}

We have proved with Theorem \ref{th} a characterization of sampling for the $SE(2)$ transform with respect to certain regular sets, that possess some of the attributes of orientation preference maps (OPM) measured in brain's primary visual cortex (V1). This result is analogous to \cite[Th. 2.5]{Bownik2008} and \cite[Th. 4.1]{CabrelliPaternostro2010}. It allows one to decide whether a given set, and a given modulated gaussian mother wavelet, form a frame of a Paley-Wiener space, by solving finite dimensional spectral problems.

\newpage

We have studied numerically those finite dimensional problems for different values of the parameters. When we have stayed sufficiently close to the values provided by the explicit solutions to a simplified problem, we have indeed found frames and quantified their condition number. This allows us to compare the obtained numerical results with some of the literature in the modeling of V1.

In \cite{Bosking2002}, the authors show that in V1 (of tree shrews) the size of receptive fields, let us use $\sigma$ to quantify it, is of the same order of magnitude as the typical spacing of OPM, that for our model, in the units of \S \ref{sec:numerics}, is 1. Indeed, we found frames with values of $\sigma$ approximately in between $\frac14$ and $\frac23$. In the neuroscience literature, this fact is considered instrumental to the coverage principle. From the point of view of the completeness of the sampling set provided by V1 neurons, this ensures that, for each orientation of a visual stimulus located in a certain position of the visual space, there will be a neuron that is sufficiently close in space and orientation to elicit a positive response, hence to detect the stimulus (see in particular \cite[Figure 1]{Bosking2002}). This mechanism may provide another heuristics for the frame problem considered here.

Another comparison of the obtained results with the behavior of V1 concerns the higher spatial density of points observed in OPM with respect to the sampling sets considered here, see Figure \ref{fig:pincenters}. Whilst it is true that more shifted lattices can produce lower condition numbers, as observed in the experiment leading to Figure \ref{fig:SIM7}, in most cases the differences are small. One possible reason for having in V1 so many more approximately shifted lattices than the ones needed to get a reliable representation of orientations, is to allow for the simultanous distribution of the other features that are indeed collected \cite{Swindale2000}.

Finally, it is not clear whether some distributions of shifts $\{\alpha_k\}_{k = 1}^N$ may provide better or worse condition numbers. In the proposed setting, we could not observe any significant difference by changing their distributions, so we disregarded the possibility and provided averaged results. In V1, their typical pinwheel distribution (see Figure \ref{fig:OPM}) is observed in many mammals, but it may be relevant for other computational or biological purposes.

\end{document}